\theoremstyle{plain}\newtheorem{Theorem}{Theorem}[section]
\theoremstyle{plain}
\theoremstyle{plain}\newtheorem{Corollary}[Theorem]{Corollary}
\theoremstyle{plain}\newtheorem{Lemma}[Theorem]{Lemma}
\theoremstyle{plain}\newtheorem{Proposition}[Theorem]{Proposition}
\theoremstyle{plain}
\theoremstyle{definition}
\theoremstyle{definition}
\theoremstyle{definition}
\theoremstyle{definition}
\theoremstyle{definition}
\theoremstyle{definition}\newtheorem{Remark}[Theorem]{Remark}
\theoremstyle{definition}
\theoremstyle{definition}
\def\CA{{\mathcal{A}}}  
\def\CB{{\mathcal{B}}}
\def\CK{{\mathcal{K}}}
\def\CL{{\mathcal{L}}}
\def\CP{{\mathcal{P}}}    
\def\CR{{\mathcal{R}}}
\def\CU{{\mathcal{U}}}
\def\CA{{\mathcal{A}}}
\def\CA{{\mathcal{A}}}
\def\F{{\mathbb F}}   
\def\Fp{{\mathbb F_p}}
\def\Aut{\mathrm{Aut}}               \def\tenk{\otimes_k}     
\def\Br{\mathrm{Br}}                 \def\ten{\otimes}
\def\chr{\mathrm{char}}
\def\dim{\mathrm{dim}}
\def\Der{\mathrm{Der}}
\def\Ext{\mathrm{Ext}}
\def\Hom{\mathrm{Hom}}
\def\IBr{\mathrm{IBr}}
\def\IDer{\mathrm{IDer}}
\def\mod{\mathrm{mod}}
\def\op{\mathrm{op}}
\def\Tr{\mathrm{Tr}}
\def\lmd{\lambda}
\def\tn{\textnormal}
\def\hx{\hat{x}}
\def\Va{V_{x,\alpha}}
\def\lmd{\lambda}
\def\tn{\textnormal}
\def\hx{\hat{x}}
\def\Va{V_{x,\alpha}}
\newtheorem*{theorem*}{Theorem}
\author{William Murphy} 
\date{\today}
\title[The $HH^1$ of the blocks of the Mathieu groups]{The Lie algebra structure of the $HH^1$ of the blocks of the sporadic Mathieu groups}
\begin{document}

\begin{abstract}
Let $G$ be a sporadic Mathieu group and $k$ an algebraically closed field of prime characteristic $p$, dividing the order of $G$. In this paper we describe some of the Lie algebra structure of the first Hochschild cohomology groups of the $p$-blocks of $kG$. In particular, letting $B$ denote a $p$-block of $kG$, we calculate the dimension of $HH^1(B)$ and in the majority of cases we determine whether $HH^1(B)$ is a solvable Lie algebra.
\end{abstract}

\vspace*{-8mm}
\maketitle

\vspace*{-4mm}
\section{Introduction}

Let $G$ be a finite group and $k$ an algebraically closed field of prime characteristic $p$ dividing the order of $G$. It is well known that the first Hochschild cohomology group (the $HH^1$) of the group algebra $kG$ form a Lie algebra over $k$. By a result of Fleischmann, Janiszczak and Lempken \cite{FJL}, this Lie algebra has non-zero dimension over $k$. On the other hand, the problem of determining whether $HH^1(B)\neq\{0\}$ for a $p$-block $B$ of $kG$ is still open in general. 

In this paper we settle this problem for the blocks of the sporadic Mathieu groups, in particular showing the following.

\begin{Theorem}\label{ThrmMain}
Let $G$ be a sporadic Mathieu group. Then for all primes $p$ dividing $|G|$ and for all $p$-blocks $B$ of $kG$ with a non-trivial defect group, $HH^1(B)\neq\{0\}$. Moreover, for each $B$ we determine the dimensions $\dim_k(HH^1(B))$.
\end{Theorem}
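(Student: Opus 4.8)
The plan is to reduce the global statement to a finite, case-by-case computation of $\dim_k HH^1(B)$, organised first by the prime $p$ and then by the defect group of $B$; nonvanishing is then read off from positivity of the computed dimension. The starting point is the standard isomorphism $HH^n(kG)\cong\bigoplus_{[x]}H^n(C_G(x),k)$, the sum running over representatives of the conjugacy classes of $G$ with $k$ the trivial module, which comes from decomposing $kG$ under the adjoint action into permutation modules $\mathrm{Ind}_{C_G(x)}^G k$ and applying the Eckmann--Shapiro lemma. In degree one this gives $\dim_k HH^1(kG)=\sum_{[x]}\dim_{\F_p}\Hom(C_G(x),\F_p)$, which is computable from the ATLAS data: for each class representative $x$ one needs only the structure of $C_G(x)$ up to its largest elementary abelian $p$-quotient. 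This yields the total $\dim_k HH^1(kG)=\sum_B \dim_k HH^1(B)$.

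Since $HH^1$ is invariant under derived equivalence (Rickard--Keller) and under stable equivalence of Morita type, and is preserved by Morita equivalence, the next step is to dispose of all blocks with cyclic defect group. For every prime $p$ with $p^2\nmid|G|$ -- in particular $p\in\{5,7,11,23\}$ throughout, and several further non-principal cases at $p=2,3$ -- each block has cyclic or trivial defect, so $B$ is derived equivalent to its Brauer tree algebra. Using the known Brauer trees together with the explicit formula for $\dim_k HH^1$ of a Brauer tree algebra in terms of the number of edges and the exceptional multiplicity, I obtain $\dim_k HH^1(B)$, which the formula shows is positive for every non-trivial cyclic defect group (for instance the one-edge case is the truncated algebra $k[x]/(x^{p^a})$ with $p\mid p^a$, whose $HH^1$ is non-zero). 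Blocks of defect zero are simple and contribute $0$.

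The remaining blocks have non-cyclic defect, occurring only for $p\in\{2,3\}$. For the small defect groups -- the abelian groups $C_3\times C_3$ (for $M_{11},M_{22},M_{23}$), the extraspecial groups of order $27$ (for $M_{12},M_{24}$), and the semidihedral group of order $16$ (for $M_{11}$ at $p=2$) -- I would identify the basic algebra explicitly: via Brou\'e's abelian defect conjecture, known in the relevant abelian cases, to reduce the principal block to a block of the local subgroup $N_G(D)$, and via Erdmann's classification of tame blocks in the semidihedral case, after which $HH^1$ is computed directly from the quiver and relations. For the genuinely wild principal $2$-blocks of $M_{12},M_{22},M_{23},M_{24}$ (defect groups of order up to $2^{10}$) no such classification is available; here the key device is subtraction. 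Once one checks that the principal block $B_0$ is the only positive-defect $2$-block that is not of cyclic type, the identity $\dim_k HH^1(kG)=\sum_B \dim_k HH^1(B)$ determines $\dim_k HH^1(B_0)$ from the global total and the cyclic contributions, bypassing any direct computation on the large wild algebra.

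The main obstacle is precisely the verification that isolates $B_0$ in the wild cases: one must pin down the full $2$- and $3$-block distributions of $M_{12},M_{22},M_{23},M_{24}$, with their defect groups and invariants $k(B)$ and $l(B)$, and confirm that every non-principal positive-defect block has cyclic defect, so that the subtraction argument actually separates the principal block rather than only yielding a sum of unknowns. Where two or more non-cyclic blocks coexist the subtraction alone is insufficient, and one must fall back on explicit structural input -- a known derived or stable equivalence, or a presentation of the basic algebra -- to compute each $HH^1(B)$ individually. Assembling and justifying this structural data for the wild blocks is the technical heart of the argument.
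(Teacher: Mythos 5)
Your proposal follows essentially the same route as the paper: the centraliser decomposition gives $\dim_k HH^1(kG)$, blocks with non-trivial cyclic defect group are handled by the derived equivalence with their Brauer correspondent (the paper phrases this via $k(P\rtimes E)$ rather than Brauer tree algebras, yielding $\dim_k HH^1(B)=(|P|-1)/|E|>0$), defect-zero blocks contribute nothing, and the principal block is recovered by subtraction. The one configuration you flag as potentially breaking the subtraction --- a non-principal block of non-cyclic positive defect --- occurs exactly once across all Mathieu groups and primes (the Klein-four-defect $2$-block of $M_{12}$), and the paper resolves it exactly as you suggest, via Erdmann's classification of tame blocks and Holm's computation of their $HH^1$.
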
 

\begin{Remark}
In the process of proving Theorem \ref{ThrmMain} we collect interesting information on the $p$-blocks of the sproadic Mathieu groups. Out of the collection of $40$ such $p$-blocks with non-zero defect, we determine that $37$ have $HH^1$ that is a solvable Lie algebra, and $1$ has $HH^1$ that is a simple Lie algebra. Of the $18$ $p$-blocks whose $HH^1$ is a $2$-dimensional Lie algebra, we determine that $15$ are non-abelian and $1$ is abelian. Moreover, we have that for $G=M_{11}$ or $M_{22}$, $HH^1(kG)$ is a solvable Lie algebra for all primes $p=\chr(k)$ dividing $|G|$.
\end{Remark}

\begin{Remark}
By Farrell and Kessar \cite[8.1(ii)]{FarKess} the Morita-Frobenius number for the blocks of the sporadic Mathieu groups is 1. Thus by \cite{KessarDonovan} any basic algebra $A$ of these blocks is defined over the prime field $\Fp$; $A$ has a $k$-basis whose structure constants are contained in $\Fp$, or equivalently for each block $B$ and basic algebra $A$ of $B$, there is an $\Fp$-algebra $A_0$ such that $A\cong k\ten_{\Fp} A_0$. Consequently, $HH^1(B)\cong HH^1(A)\cong k\ten_{\Fp}HH^1(A_0)$ is an isomorphism of Lie algebras; that is the Lie algebra $HH^1(B)$ is defined over $\Fp$.

Moreover, for $G=M_{11}$ or $M_{22}$ one can read of from Table \ref{tabdefect} that the blocks $B$ of $kG$ have a Frobenius number of 1, so that $B \cong k\tenk B_0$ for some $\Fp$-algebra $B_0$: $B$ itself is also defined over $\Fp$ and we see that in this case there is a Lie algebra isomorphism $HH^1(B)\cong k\ten_{\Fp}HH^1(B_0)$ .
\end{Remark}

In order to determine the dimensions of the $HH^1$ of the blocks of $kG$, the dimension of $HH^1(kG)$ is calculated, and given in Table \ref{tabdimkg}. In Section \ref{Mathieu} we prove our result using the centraliser decomposition (\cite[Theorem 2.11.2]{BensonII}, reviewed in Proposition \ref{CentraliserDecompProp}) and the $p$-block structure of each sporadic Mathieu group. In Section \ref{PrE} we determine in general the Lie algebra structure of $HH^1(B)$ for $B$ a block with a non-trivial cyclic defect group, giving an explicit formula for its dimension (though this is already folklore) and a characterisation of a basis for $HH^1(B)$ as a Lie algebra. This is a solvable Lie algebra in general (see \cite[Example 5.7]{LinRubII}), and we note that for some blocks $B$ with a non-trivial cyclic defect group, we see that $HH^1(B)$ is $1$-dimensional. In this case the we are also able to determine that there is  $p$-toral basis of $HH^1(B)$.

In Section \ref{altsum} we use an alternative method to determine the dimension of the $HH^1$ of the non-principal $2$-block of the sporadic Mathieu group $M_{12}$ and we hope to use this method in other cases going forwards. When $G=M_{12}$ and $p=2$, the non-principal block of $kG$ has a non-cyclic and non-trivial defect group; this is the only such case over all primes dividing the orders of all sporadic Mathieu groups $G$, and so the only case where the centraliser decomposition alone is insufficient to compute the dimensions of the $HH^1$. In particular we use a result of K\"{u}lshammer and Robinson \cite{KueRoHH} to verify this dimension. This result gives an alternating sum formula for the dimension of the Tate-Hochschild cohomology, equivalent to the Hochschild cohomology in positive degree, and to our knowledge this paper marks the first explicit use of this formula. Whilst a shorter proof exists to determine these dimensions (see Theorem \ref{ThrmM12}(i)), our verification showcases the power of this formula and its potential application to other finite groups that do not have such ``nice" $p$-block structure as the Mathieu groups (see the remarks following Table \ref{tabdefect}).

Appendix \ref{Appcode} gives the \texttt{GAP} code constructed to compute the dimensions of the $HH^1$ of (hypothetically) any finite group on which \texttt{GAP} can perform calculations, using the well-known centraliser decomposition.

We note that although the Hochschild cohomology of a $k$-algebra $A$ is defined by $HH^*(A)=\Ext^*_{A\ten_k A^{\op}}(A;A)$, the first Hochschild cohomology admits an alternative characterisation as the set of $k$-linear derivations on $A$, modulo inner derivations: $HH^1(A)\cong \Der(A)/\IDer(A)$. We will use this point of view throughout.

We will write by $C_n$, $S_n$, $A_n$ and $D_{2n}$ the cyclic, symmetric, alternating and dihedral groups of orders $n$, $n!$, $n!/2$ and $2n$ respectively, with $Q_{2^n}$ and $SD_{2^n}$ denoting the quaternion and semi-dihedral groups of order $2^n$. The projective special linear group of degree $n$ over the finite field with $q$ elements is denoted $L_n(q)$, and for $G$ a group acting on a set $X$, we will denote by $X^G$ the fixed points of $X$ under this action.

\section{The sporadic Mathieu groups}\label{Mathieu}

Let $G$ be a sporadic Mathieu group, and let $k$ be an algebraically closed field of prime characteristic $p$ dividing $|G|$. Throughout we use \texttt{GAP} \cite{GAP}, the ATLAS \cite{ATLAS}, the online group database found at \cite{DokGrp}, the Modular Atlas project \cite{ModAtlas} and its print counterpart \cite{AtlasBC} as our sources of data on the groups $G$, their subgroups and their blocks.

To calculate the dimensions of $HH^1(kG)$ over $k$ we use the centraliser decomposition. From this we construct a simple code in \texttt{GAP} (see Appendix \ref{Appcode}) which provides the following data on the dimensions of $HH^1(kG)$ at the different primes $p$.

\begin{table}[H]
\caption{ The dimensions of $HH^1(kG)$ for $p=\chr(k)$ dividing $|G|$. }\label{tabdimkg}
\centering
\begin{tabular}{l||l||l|l|l|l|l|l}
	$G$   & \hfil Order 											& $p=2$	& $p=3$	& $p=5$	& $p=7$	& $p=11$ & $p=23$ \\
  	\hline
  	\hline
$M_{11}$   &  $2^4 \cdot 3^2 \cdot 5 \cdot 11$			 			& $6$	& $2$	& $1$	& $-$	& $2$	 & $-$ \\
$M_{12}$   &  $2^6 \cdot 3^3 \cdot 5 \cdot 11$						& $15$	& $4$	& $2$	& $-$	& $2$	 & $-$ \\
$M_{22}$   &  $2^7 \cdot 3^2 \cdot 5 \cdot 7 \cdot 11$				& $9$	& $3$	& $1$	& $2$	& $2$	 & $-$ \\
$M_{23}$   &  $2^7 \cdot 3^2 \cdot 5\cdot7 \cdot 11 \cdot 23$			& $9$	& $5$	& $3$	& $4$	& $2$	 & $2$ \\
$M_{24}$   &  $2^{10} \cdot 3^3 \cdot 5 \cdot 7 \cdot 11 \cdot 23$	& $22$	& $10$	& $4$	& $6$	& $1$	 & $2$ 
\end{tabular} 
\end{table}

We need to describe how the dimensions of $HH^1(kG)$ given in Table \ref{tabdimkg} decompose into the dimensions of the $HH^1$ of the $p$-blocks of $kG$, in view of the structure provided in Table \ref{tabdefect} below. This table - the data of which one may verify in \texttt{GAP} - describes the $p$-block structure of $kG$ for the prime divisors $p$ of $|G|$, in particular the number of blocks and their defect. The notation $[n_0,n_1,\ldots,n_m]$ denotes that $G$ has $p$-block structure $kG=B_0\oplus B_1\oplus \cdots\oplus B_m$ where $B_0$ is the principal block of $kG$, $B_i$ has defect $n_i$, and the blocks are ordered according to the decreasing size of their defect. The notation $1^n$ (resp. $0^n$) denotes $n$ successive $p$-blocks whose defect groups are non-trivial cyclic (resp. trivial).

\begin{table}[H]
\caption{ The defect of the $p$-blocks of $kG$ for $p=\chr(k)$ dividing $|G|$.  }\label{tabdefect}
\centering
\begin{tabular}{l||l|l|l|l|l|l}
	   										$G$	& $p=2$	& $p=3$	& $p=5$	& $p=7$	& $p=11$ & $p=23$ \\
  	\hline
  	\hline
$M_{11}$   	& $[4,0^2]$	& $[2,0]$	& $[1,0^5]$	& $-$	& $[1,0^3]$	 & $-$ \\
$M_{12}$   	& $[6,2]$	& $[3,1,0]$	& $[1^2,0^5]$	& $-$	& $[1,0^8]$	 & $-$ \\
$M_{22}$   	& $[7]$	& $[2,1,0^3]$	& $[1,0^7]$	& $[1,0^7]$	& $[1,0^5]$	 & $-$ \\
$M_{23}$   	& $[7,0^2]$	& $[2,1,0^5]$	& $[1^2,0^8]$	& $[1^2,0^7]$	& $[1,0^{10}]$	 & $[1,0^4]$ \\
$M_{24}$   	& $[10]$	& $[3,1^4,0]$	& $[1^3,0^{12}]$	& $[1^3,0^{11}]$	& $[1,0^{15}]$	 & $[1,0^{13}]$ 
\end{tabular} 
\end{table}

The structure of the Mathieu groups lends itself to such calculations in general: for each sporadic Mathieu group $G$ and each prime $p$ dividing $|G|$, all non-principal $p$-blocks have a cyclic defect group aside from the aforementioned exception of $G=M_{12}$ and $p=2$ (see the the last remark of $\S1$ in \cite{AnCon}). The blocks of the Mathieu groups with a non-trivial cyclic defect group have structure that is well understood and the dimensions of the $HH^1$ is easily calculated in this case (see Section \ref{PrE}). On the other hand for the blocks with a trivial defect group the $HH^1$ vanishes: any such block is isomorphic to a matrix algebra over $k$ (see \cite{BraArith}), and the $HH^*$ of a separable algebra is zero \cite[Theorem 4.1]{Hoch}.

The dimensions of the $HH^1$ of the principal block $B_0$ are then readily calculated (aside from the noted exception) by comparing the dimensions of $HH^1(kG)$ and $HH^1(B)$ for the non-principal blocks $B$ with cyclic defect groups: $$\dim_k(HH^1(B_0))=\dim_k(HH^1(kG)) \hspace{3mm} -\sum_{\substack{B\neq B_0,\\ B \ \tn{has cyclic} \\ \tn{ defect group}}}\dim_k(HH^1(B)).$$

In the exceptional case, $G=M_{12}$ has a non-principal 2-block with Klein four defect group and so to calculate the dimensions and show solvability of the $HH^1$ of this block we use results of Holm \cite{Holm}, Erdmann \cite{Erdmanntame} and Linckelmann and Rubio y Degrassi \cite{LinRubII}. 

Since we are concerned with determining not just the dimensions of the $HH^1$ but its solvability as a Lie algebra, we will often make use of the following result.

\begin{Theorem}[\cite{LinRubII,RubSchroSol}]\label{ExtThrm}

Let $A$ be a finite
dimensional $k$-algebra such that $\dim_k(\Ext^1_
A(S,S)) = 0$ and $\dim_k(\Ext^1_A(S,T)) \leq 1$, for all non-isomorphic simple $A$-modules $S$ and $T$. Then $HH^1(A)$ is a solvable Lie algebra.

\end{Theorem}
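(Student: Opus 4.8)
The plan is to connect the first Hochschild cohomology of $A$ to the quiver and relations of $A$, since $HH^1(A)$ is governed by derivations modulo inner derivations. The key observation is that the Gabriel quiver $Q$ of $A$ has vertices indexed by the isomorphism classes of simple $A$-modules, and the number of arrows from the vertex for $S$ to the vertex for $T$ equals $\dim_k(\Ext^1_A(S,T))$. The hypotheses therefore say precisely that $Q$ has no loops (from $\dim_k(\Ext^1_A(S,S))=0$) and at most one arrow between any ordered pair of distinct vertices (from $\dim_k(\Ext^1_A(S,T))\leq 1$). First I would reduce to the basic algebra of $A$: Hochschild cohomology is a Morita invariant, so $HH^1(A)\cong HH^1(A_0)$ where $A_0$ is basic and Morita equivalent to $A$, and $A_0\cong kQ/I$ for an admissible ideal $I$.

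The next step is to exhibit $HH^1(A)$ as a Lie subquotient of a Lie algebra attached to the quiver alone. Following the approach of Linckelmann and Rubio y Degrassi, one considers the subalgebra of derivations that preserve a fixed set of coset representatives of a basis lifting the arrows; there is a natural map from $HH^1(A)$ into the Lie algebra built from the path-grading of $kQ$. The crucial structural input is that, because $Q$ has no loops and no multiple arrows, the degree-zero part of the relevant derivation Lie algebra is abelian: derivations that scale each arrow by a scalar (the ``diagonal'' or toral derivations coming from the $\Z$-grading of $kQ$ by a choice of weights on arrows) commute with one another. One then shows that $HH^1(A)$ is built from these toral derivations together with strictly positive-degree pieces, and that the positive part is a nilpotent ideal complement, so the whole Lie algebra is an extension of an abelian Lie algebra by a nilpotent one.

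I would organise the argument so that $HH^1(A)$ sits inside a grading-compatible filtration: writing $HH^1(A)=\bigoplus_{n\geq 0}HH^1(A)_n$ with respect to the path-length grading induced on cohomology, the positive-degree part $\bigoplus_{n\geq 1}HH^1(A)_n$ is a Lie ideal, and it is nilpotent because brackets raise degree (and the grading is bounded since $I$ is admissible, so high powers of the radical vanish). The quotient by this ideal is $HH^1(A)_0$, the classes of degree-zero derivations, and the no-loops/no-multiple-arrows hypothesis forces this to be abelian. A Lie algebra that is an extension of an abelian Lie algebra by a nilpotent ideal is solvable, which gives the conclusion.

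The hard part will be justifying that the degree-zero component $HH^1(A)_0$ really is abelian from the quiver hypotheses alone, since degree-zero derivations need not literally act diagonally on arrows once relations are imposed; one must argue that modulo inner derivations any degree-zero class is represented by a derivation acting as a scalar on each arrow, and here the absence of loops and of parallel arrows is exactly what removes the room for non-commuting degree-zero behaviour. I would expect to lean on the explicit description of $HH^1$ of $kQ/I$ in terms of the arrow space of $Q$ (as in \cite{LinRubII,RubSchroSol}), where this computation is carried out, rather than reproving it from scratch.
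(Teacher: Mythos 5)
The paper does not prove this statement: it is quoted verbatim from \cite{LinRubII,RubSchroSol} and used as a black box, so there is no in-paper proof to compare against. Your sketch is a faithful reconstruction of the argument in those references: reduce to a basic algebra $kQ/I$, normalise a representative derivation (by subtracting inner derivations) so that it kills the idempotents and sends each arrow to a combination of parallel paths, observe that the no-loop/no-multiple-arrow hypotheses force the length-one part of such a derivation to act as a scalar on each arrow, and conclude via an abelian quotient with nilpotent kernel. One technical caveat: the direct sum $HH^1(A)=\bigoplus_{n\geq 0}HH^1(A)_n$ by path length does not exist in general, since the admissible ideal $I$ need not be homogeneous; the correct tool is the filtration by powers of $J(A)$. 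The derivations sending arrows into $J(A)^2$ form a Lie subalgebra whose iterated brackets push arrows into ever higher powers of the radical (if $D,E$ send $J(A)$ into $J(A)^2$ then $[D,E]$ sends $J(A)$ into $J(A)^3$, by the Leibniz rule), and since $J(A)$ is nilpotent this subalgebra is a nilpotent ideal of the normalised derivation algebra; the quotient embeds into the abelian algebra $k^{Q_1}$ of arrow-scalings precisely because there are no loops and no parallel arrows. With that adjustment your argument is correct and is essentially the one carried out in the cited sources.
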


We organise Theorem \ref{ThrmMain} into the following 5 results, categorised with regards to which Mathieu group they relate to. In each theorem, $k$ is an algebraically closed field of characteristic $p$ dividing $|G|$, $G$ a sporadic Mathieu group. We will write $B_0$ throughout for the principal $p$-block of $kG$, and $B_i$ for the non-principal $p$-blocks of $kG$, $i=1,2,\ldots,$ if they exist, ordered in descending size of their defect. For $P$ a cyclic $p$-group, and $E$ a cyclic $p'$-group acting on $P$, we let $HH^1(kP)^E$ denote the fixed points under the the action of $E$ on $HH^1(kP)$, induced by the action of $E$ on $P$  (see Section \ref{PrE} for details). We adopt the convention that the ``zero Lie algebra" $\CL=\{0\}$ with Lie bracket $[0,0]=0$ is trivially solvable.

\begin{Theorem}\label{ThrmM11}
Let $G=M_{11}$. Then $HH^1(kG)$ is a solvable Lie algebra isomorphic to $HH^1(B_0)$, every non-principal block of $kG$ has defect 0, and the following isomorphisms of Lie algebras with given dimensions hold. \begin{itemize}
\item[(i)] Let $p=2$ and $B_0(kL_3(q))$ be the principal $2$-block of $L_3(q)$, $q=3 \ (\mod 4)$. Then there are isomorphisms of $6$-dimensional solvable Lie algebras, $$HH^1(kG)\cong HH^1(B_0)\cong HH^1(B_0(kL_3(q))).$$ 
\item[(ii)] Let $p=3$. Then there are isomorphisms of $2$-dimensional Lie algebras $$HH^1(kG)\cong HH^1(B_0)\cong HH^1(k(C_3^2\rtimes SD_{16})).$$
\item[(iii)] Let $p=5$. Then there are isomorphisms of $1$-dimensional Lie algebras $$HH^1(kG)\cong HH^1(B_0)\cong HH^1(kC_5)^{C_4}\cong k.$$
\item[(iv)] Let $p=11$. Then there are isomorphisms of $2$-dimensional non-abelian Lie algebras $$HH^1(kG)\cong HH^1(B_0)\cong HH^1(kC_{11})^{C_5}.$$
\end{itemize}
\end{Theorem}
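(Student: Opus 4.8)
The plan is to reduce the entire theorem to the principal block and then identify that block, up to an $HH^1$-preserving equivalence, with a transparent algebra at each prime. First I would observe from Table~\ref{tabdefect} that at every prime $p\in\{2,3,5,11\}$ dividing $|M_{11}|$, each non-principal $p$-block has defect $0$; such a block is isomorphic to a full matrix algebra over $k$ (\cite{BraArith}), hence separable, so its $HH^1$ vanishes by \cite[Theorem 4.1]{Hoch}. Since $HH^1$ decomposes as a direct sum of Lie algebras along the block decomposition $kG=B_0\oplus B_1\oplus\cdots$, this already yields the Lie algebra isomorphism $HH^1(kG)\cong HH^1(B_0)$ at each prime, and the four displayed dimensions $6,2,1,2$ are exactly the values $\dim_k(HH^1(kG))$ recorded in Table~\ref{tabdimkg} and computed via the centraliser decomposition (Proposition~\ref{CentraliserDecompProp}).

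For the cyclic-defect primes $p=5$ and $p=11$, parts (iii) and (iv), the defect group of $B_0$ is $C_5$, respectively $C_{11}$, and I would feed the principal block straight into the machinery of Section~\ref{PrE}. The inertial quotient $E=N_G(P)/PC_G(P)$ is read off from the Sylow normalisers: $|N_{M_{11}}(C_5)|=20$ gives $E=C_4$, and $|N_{M_{11}}(C_{11})|=55$ gives $E=C_5$. Section~\ref{PrE} then supplies the Lie algebra isomorphisms $HH^1(B_0)\cong HH^1(kC_5)^{C_4}\cong k$ and $HH^1(B_0)\cong HH^1(kC_{11})^{C_5}$, with dimensions $1$ and $2$ matching the table; for $p=11$ the non-abelianness is read off the explicit bracket on $HH^1(kC_{11})^{C_5}$ given there.

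The substance lies in parts (i) and (ii). For $p=2$ the defect group is the Sylow subgroup $SD_{16}$, and the principal $2$-block of $M_{11}$ has three simple modules; by the classification of blocks with semidihedral defect group it is Morita equivalent---hence in particular derived equivalent---to the principal $2$-block of $L_3(q)$ for $q\equiv 3\pmod{4}$, and since the Gerstenhaber bracket on $HH^*$ is preserved under such equivalences we obtain $HH^1(B_0)\cong HH^1(B_0(kL_3(q)))$ as Lie algebras. For $p=3$ the defect group is $C_3^2$ and the Sylow normaliser is $N_{M_{11}}(P)=C_3^2\rtimes SD_{16}$, the inertial quotient $E=SD_{16}$ being the Sylow $2$-subgroup of $\Aut(C_3^2)=\GL_2(3)$; invoking the verified instance of Brou\'e's abelian defect group conjecture for this block gives a derived equivalence between $B_0$ and its Brauer correspondent $kN_{M_{11}}(P)=k(C_3^2\rtimes SD_{16})$, whence $HH^1(B_0)\cong HH^1(k(C_3^2\rtimes SD_{16}))$.

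Solvability then needs a genuine argument only at $p=2$: the algebras at $p=3,5,11$ have dimension at most $2$, and every Lie algebra of dimension at most $2$ is solvable, while the isomorphism $HH^1(kG)\cong HH^1(B_0)$ transports solvability from the block. For the $6$-dimensional algebra at $p=2$ I would first try Theorem~\ref{ExtThrm}, checking on the three-vertex Ext-quiver of $B_0(kL_3(q))$ that $\Ext^1_A(S,S)=0$ and $\dim_k(\Ext^1_A(S,T))\leq 1$ for all non-isomorphic simples $S,T$, i.e. that the quiver has no loops and at most single arrows; this is a finite check extractable from the known basic-algebra presentation. Should the quiver fail these hypotheses, as can happen for semidihedral-type algebras carrying loops or double arrows, I would instead appeal to the explicit computation of $HH^1$ for blocks with semidihedral defect group in the literature on tame blocks to exhibit it as solvable. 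Confirming solvability at $p=2$ is the one genuinely delicate point, and is where I expect the bulk of the effort to lie.
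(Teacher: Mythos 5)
Your proposal follows essentially the same route as the paper: kill the defect-zero blocks to get $HH^1(kG)\cong HH^1(B_0)$, read the dimensions off the centraliser decomposition, use the cyclic-defect machinery of Section~\ref{PrE} at $p=5,11$, identify the principal $2$-block as a tame block of type $SD(3\mathcal{B})_1$ derived equivalent to $B_0(kL_3(q))$ via Holm, and pass to the Brauer correspondent $k(C_3^2\rtimes SD_{16})$ at $p=3$ (the paper invokes Rouquier's stable equivalence and Okuyama's derived equivalence rather than Brou\'e's conjecture by name, but it is the same equivalence). The one point worth flagging is your Plan A for solvability at $p=2$: the quiver of $SD(3\mathcal{B})_1$ carries a loop, so $\Ext^1_A(S,S)\neq 0$ for one simple module and Theorem~\ref{ExtThrm} does not apply; your fallback is what the paper actually does, in the precise form of Eisele--Raedschelders' result that a non-local tame block has solvable $HH^1$.
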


\begin{table}[H]
\centering
\caption{ Data on the $p$-blocks of $kM_{11}$.  }\label{tabM11}
\begin{tabular}{l|l|l|l|l}
$G=M_{11}$		& $p=2$	& $p=3$	& $p=5$	& $p=11$ \\
  	\hline
  	\hline
No. of blocks, $B_i$, of $kG$ & 3 & 2 & 6 & 4 \\
Defects of the $B_i$ 	& $[4,0^2]$	& $[2,0]$	& $[1,0^5]$	& $[1,0^3]$	 \\
Dimensions $\dim_k(HH^1(B_i))$ 	& $[6,0^2]$	& $[2,0]$	& $[1,0^5]$	& $[2,0^3]$ \\
Sylow $p$-subgroup $P$ of $G$  	& $SD_{16}$	& $C_3^2$	& $C_5$	& $C_{11}$ \\
$E=N_G(P)/PC_G(P)$  	& $\{1\}$	& $SD_{16}$	& $C_4$	& $C_5$	
\end{tabular} 
\end{table}

In Table \ref{tabM11} above, and Tables \ref{tabM12}, \ref{tabM22}, \ref{tabM23}, and \ref{tabM24} below, the notation of the second row (the defects of the blocks) is the same notation as in Table \ref{tabdefect}. The notation of the third row is similar: $[n_0,n_1,\ldots,n_m]$ denotes that $G$ has $p$-block structure $kG=B_0\oplus B_1\oplus\cdots\oplus B_m$ where $HH^1(B_i)$ has dimension $n_i$, whilst $n^m$ denotes $m$ successive $p$-blocks whose $HH^1$ has dimension $n$.

\begin{proof}

Let $G=M_{11}$. The proof will refer to facts included in Section \ref{PrE}.

By the data in Table \ref{tabM11} one sees that for all primes dividing $|G|$, the block structure of $kG$ provides the isomorphism $HH^1(kG)\cong HH^1(B_0)$. The dimensions $\dim_k(HH^1(B_0))$ are therefore simply the same as the dimensions of $HH^1(kG)$, which are calculated using the centraliser decomposition.

Let $p=2$. A Sylow 2-subgroup of $G$ is isomorphic to the semidihedral group of order 16, $SD_{2^n}=SD_{16}$, so that $B_0$ is a tame block. The decomposition matrix of $B_0$ is available on \texttt{GAP} allowing confirmation that $B_0$ has 3 irreducible Brauer characters, and so 3 simple modules up to isomorphism. Using Erdmann's classification of tame blocks \cite[pp.300]{Erdmanntame} one verifies that the decomposition and Cartan matrices of $B_0$ match those of $SD(3\CB)_1$. This may be further verified in \cite[3.3/3.4]{Holm} which gives us explicitly a derived equivalence between $B_0$ and $B_0(L_3(q)), q=3 \ (\mod 4)$. The quiver and relations of $B_0$ given by Erdmann enable us to determine that $B_0$ is non-local, which thanks to a result of Eisele and Raedschelders \cite[4.7]{EisRad}
provides the solvability of $HH^1(B_0)$. This proves (i).

Now let $p=3$. A Sylow 3-subgroup $P$ is isomorphic to $C_3\times C_3$, with inertial quotient $E=SD_{16}$ as in Table \ref{tabM11}. By a result of Rouquier \cite{Rouqstable} there is a stable equivalence of Morita type between $B_0$ and its Brauer correspondent $kN_G(P)=P\rtimes E$, (in fact there is derived equivalence between $B_0$ and $kN_G(P)$ constructed explicitly by Okuyama in an unpublished result \cite{OkDerEq}). Whence $HH^1(B_0)\cong HH^1(k(P\rtimes E))$ proving (ii). For (iii) and (iv), Theorems \ref{stabeqThrm}(iii) and \ref{fixptisom} provide the second isomorphism, and using Corollary \ref{dimprop} one may verify the dimensions of both match those found by the centraliser decomposition, completing the proof.
\end{proof}

\begin{Remark}
In the sequel (Theorem \ref{preformula}) we will see that one can write down explicitly a $k$-basis and Lie bracket for the Lie algebra $HH^1(kP)^{E}$ encountered in Theorem \ref{ThrmM11}(iii) and (iv), and that in fact this is possible for all $p$-blocks of finite groups with a non-trivial cyclic defect group $P\cong C_p$.
\end{Remark}

\begin{Theorem}\label{ThrmM12}
Let $G=M_{12}$. Then the following isomorphisms of Lie algebras with given dimensions hold.
\begin{enumerate}
\item[(i)] Let $p=2$. Then $HH^1(B_0)$ is a $13$-dimensional
Lie algebra and there is an isomorphism of $2$-dimensional abelian Lie algebras $$HH^1(B_1)\cong HH^1(kA_4).$$
\item [(ii)] Let $p=3$. Then $HH^1(B_0)$ is a $3$-dimensional solvable Lie algebra and there are isomorphisms of $1$-dimensional Lie algebras $$HH^1(B_1)\cong HH^1(kS_3)\cong k.$$ 

\item [(iii)] Let $p=5$. Then there are isomorphisms of $1$-dimensional Lie algebras $$HH^1(B_0)\cong HH^1(B_1)\cong HH^1(C_5)^{C_4}\cong k.$$ 
\item [(iv)] Let $p=11$. Then there are isomorphisms of $2$-dimensional non-abelian Lie algebras $$HH^1(kG)\cong HH^1(B_0)\cong HH^1(kC_{11})^{C_5}.$$
\end{enumerate}
\end{Theorem}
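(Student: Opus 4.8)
The plan is to treat the four primes separately, in each case reading the $p$-block structure of $kM_{12}$ off Table \ref{tabdefect} and the total dimension $\dim_k(HH^1(kM_{12}))$ off Table \ref{tabdimkg}, and then splitting this dimension across the blocks. Every defect-zero block contributes nothing to $HH^1$, and every non-principal block with a non-trivial cyclic defect group is handled by the machinery of Section \ref{PrE}: Theorems \ref{stabeqThrm} and \ref{fixptisom} identify $HH^1(B_i)$ with a fixed-point Lie algebra $HH^1(kP)^E$, while Corollary \ref{dimprop} (and the explicit basis of Theorem \ref{preformula}) computes its dimension and bracket. The dimension of the principal block is then recovered by the subtraction formula of Section \ref{Mathieu}.

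For $p=11$ the structure $[1,0^8]$ makes $B_0$ the only block of positive defect, so $HH^1(kG)\cong HH^1(B_0)$, and Section \ref{PrE} identifies this with $HH^1(kC_{11})^{C_5}$; the explicit basis shows it is $2$-dimensional and non-abelian, matching the entry $2$ in Table \ref{tabdimkg}. For $p=5$ the structure $[1^2,0^5]$ and the same machinery give $HH^1(B_0)\cong HH^1(B_1)\cong HH^1(kC_5)^{C_4}\cong k$, with $1+1=2$ exhausting $HH^1(kG)$. For $p=3$ the structure $[3,1,0]$ has cyclic-defect block $B_1$ with defect group $C_3$ and inertial quotient $C_2$; since $kS_3=k(C_3\rtimes C_2)$ realises this $P\rtimes E$, Section \ref{PrE} yields $HH^1(B_1)\cong HH^1(kC_3)^{C_2}\cong HH^1(kS_3)\cong k$, the defect-zero block contributes nothing, and subtraction from $\dim_k(HH^1(kG))=4$ gives $\dim_k(HH^1(B_0))=3$. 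Solvability of this $B_0$ I would obtain from Theorem \ref{ExtThrm}, verifying from the decomposition and Cartan data (available in \texttt{GAP} and the modular Atlas) that $\dim_k\Ext^1_{B_0}(S,S)=0$ and $\dim_k\Ext^1_{B_0}(S,T)\leq 1$ for all non-isomorphic simples $S,T$; the $1$-dimensional pieces are abelian, while non-abelianness of $HH^1(B_0)$ at $p=11$ reads off the explicit bracket furnished by Theorem \ref{preformula}.

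The genuinely new work, and the main obstacle, is the case $p=2$, whose structure $[6,2]$ contains the sole non-cyclic non-trivial non-principal block across all the Mathieu groups: $B_1$ has a Klein four defect group, so the cyclic-defect theory of Section \ref{PrE} does not apply. Here the plan is to locate $B_1$ up to derived equivalence, which preserves $HH^1$ as a Lie algebra. By Erdmann's classification of tame blocks and the known trichotomy of blocks with Klein four defect group, $B_1$ is derived equivalent to one of $k(C_2\times C_2)$, $kA_4$ or $B_0(kA_5)$; reading the number and dimensions of the simple modules of $B_1$ off its decomposition matrix, and invoking the results of Holm, Erdmann and Linckelmann--Rubio y Degrassi, I would match $B_1$ to the $kA_4$ class. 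This gives $HH^1(B_1)\cong HH^1(kA_4)$, and a direct computation from the derivations of $kA_4$ (equivalently from its quiver with relations) shows this Lie algebra is $2$-dimensional and abelian.

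With $\dim_k(HH^1(B_1))=2$ established, the subtraction formula yields $\dim_k(HH^1(B_0))=15-2=13$. Because $p=2$ is the one case where the centraliser decomposition does not on its own isolate the principal block, I would verify the value $13$ independently by the alternating-sum computation of Section \ref{altsum}, applying the K\"ulshammer--Robinson formula to $B_0$. The most delicate point is thus the precise placement of the Klein four block $B_1$ within its derived equivalence class together with the confirmation that its $HH^1$ is abelian; once that is settled, the remaining cases collapse to the cyclic-defect theory of Section \ref{PrE} and bookkeeping against Tables \ref{tabdimkg} and \ref{tabdefect}.
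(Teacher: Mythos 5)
Your overall route coincides with the paper's: defect-zero blocks contribute nothing, the cyclic-defect blocks are handled by Theorems \ref{stabeqThrm} and \ref{fixptisom} together with Corollary \ref{dimprop}, the principal-block dimensions come from subtraction against Table \ref{tabdimkg}, and the Klein four block at $p=2$ is placed in its derived equivalence class (the paper matches its decomposition and Cartan matrices to Erdmann's type $D(3\CK)$ and cites Holm, then computes $HH^1(k(C_2\times C_2))^{C_3}$ by relative traces to see abelianness; your trichotomy argument reaches the same class, since $kA_4$ and $B_0(kA_5)$ are derived equivalent and the count of three simple modules rules out $k(C_2\times C_2)$). The independent verification of $\dim_k(HH^1(B_0))=13$ by the K\"ulshammer--Robinson alternating sum is exactly what the paper does in Section \ref{altsum}.

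The one step that does not go through as you state it is the solvability of $HH^1(B_0)$ at $p=3$. You propose to verify the hypotheses of Theorem \ref{ExtThrm} ``from the decomposition and Cartan data'', but these data do not determine the relevant Ext groups: $\dim_k\Ext^1_{B_0}(S,T)$ is the multiplicity of $T$ in $\rad(P_S)/\rad^2(P_S)$, and in particular the condition $\Ext^1_{B_0}(S,S)=0$ cannot be read off the Cartan matrix, whose diagonal entries exceed $1$ for a block of positive defect. What is actually needed is the Loewy (radical) structure of the projective indecomposable modules, and this is precisely what the paper imports from Koshitani and Waki \cite[Theorem 1]{KoshWak} before applying Theorem \ref{ExtThrm}. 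With that substitution your argument is complete; everything else agrees with the paper's proof.
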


\begin{Remark}\label{UsRmrk}
Let $p=3$. By Usami \cite[Proposition 4]{Usami} there is a perfect isometry between the principal blocks of $kM_{12}$ and $L_3(3)$, and we note that the $HH^1$ of both is a 3-dimensional solvable Lie algebra. The dimension of the $HH^1$ in the $L_3(3)$ case is computed by the centraliser decomposition, and to see solvability in this case we apply a result of Koshitani \cite[Theorem 1]{Kosh} to Theorem \ref{ExtThrm}. On the other hand we are not aware of the existence of a derived equivalence in the literature and so we are unable to determine whether $HH^1(B_0)$ and $HH^1(kL_3(3))$ are isomorphic as Lie algebras.
\end{Remark}

\begin{table}[H]
	\centering
	\caption{ Data on the $p$-blocks of $kM_{12}$ }\label{tabM12}
	\begin{tabular}{l|l|l|l|l}
		$G=M_{12}$		& $p=2$	& $p=3$	& $p=5$	& $p=11$  \\
		\hline
		\hline
		No. of blocks, $B_i$ of $kG$ & 2 & 3 & 7 & 9 \\
		Defects of the $B_i$ 				&  $[6,2]$	& $[3,1,0]$	& $[1^2,0^5]$	& $[1,0^8]$	   \\
		Dimensions $\dim_k(HH^1(B_i))$ 		&  $[13,2]$	& $[3,1,0]$	& $[1^2,0^5]$	& $[2,0^8]$    \\
		Sylow $p$-subgroup $P$ of $G$  		&  $C_4^2\rtimes C_2^2$	& $3^{1+2}_+$	& $C_5$ 	& $C_{11}$  \\
		$E=N_G(P)/PC_G(P)$  	& $\{1\}$	& $C_2\times C_2$		& $C_4$			& $C_5$				\end{tabular} 
\end{table}

\begin{proof}
As before, the proof refers to results from Section \ref{PrE}. 

Let $p=2$. A defect group of $B_1$ is isomorphic to the Klein 4 group (see \cite{AnCon}), so that $B_1$ is a tame block. The decomposition matrix of $B_1$ is available on \texttt{GAP} allowing confirmation that $B_1$ has 3 irreducible Brauer characters, and so 3 simple modules up to isomorphism. Using Erdmann's classification of tame blocks \cite[pp.296]{Erdmanntame} one verifies that the decomposition and Cartan matrices of $B_1$ match those of $D(3\CK)$. We note that by \cite[Proposition 5.3]{LinRubII} this implies that $HH^1(B_1)$ is a solvable Lie algebra. The isomorphism and dimension of $HH^1(B_1)$ is then given by Holm \cite[2.1(2)]{Holm}, and by Table \ref{tabdimkg} we recover $\dim_k(HH^1(B_0))=13.$ 

To show that $HH^1(B_1)$ is the abelian $2$-dimensional Lie algebra, we use a technique that will occur again in the sequel (c.f. the proofs of Theorems \ref{ThrmM22}(ii) and \ref{preformula}) to find a basis for $HH^1(k(C_2\times C_2))^{C_3}$, which we will see is isomorphic to $HH^1(kA_4)$. Let $P=C_2\times C_2$ have generators $x$ and $y$, and $E=C_3$ have a generator $r$ so that $A_4\cong P\rtimes E$ with $r\cdot x=xy$ and $r\cdot y =x$. One verifies that the action of $E$ on $P$ is Frobenius, so the inclusion of $HH^1(kP)^E$ in $HH^1(k(P\rtimes E))$ (see Theorem \ref{fixptisom} below) is an isomorphism. We then use a result of Le and Zhou \cite[Corollory 3.4]{LeZhou} to find a basis of size $8$ for $HH^1(kP)$, and a result of Linckelmann \cite[Proposition 2.5.10]{LinBlockI} to find a basis of size $2$ for $HH^1(kP)^{E}$ and in particular we show that this basis has a trivial Lie bracket.

Let $P_1=\langle x\rangle$ and $P_2=\langle y\rangle$. Then the result \cite[Corollory 3.4]{LeZhou} tells us that there is an isomorphism of Lie algebras $HH^1(kP)\cong HH^1(kP_1)\tenk kP_2\oplus kP_1\tenk HH^1(kP_2).$ Let $d_i$ (resp. $d_i'$) be the derivation on $kP_1$ (resp. $kP_2$) sending $x\mapsto x^i$ (resp. $y\mapsto y^i$) so that $\{d_0,d_1\}$ is a $k$-basis for $HH^1(kP_1)$ as a Lie algebra (resp. $\{d_0',d_1'\}$ for $HH^1(kP_2)$). A basis for $HH^1(kP)$ is then given by $X=\{d_i\ten y^j, x^j\ten d_i' \ | \ i,j=0,1\}$, where $d_i\ten y^j$ sends $x\mapsto x^iy^j$ and $y\mapsto 0$, whilst $x^j\ten d_i'$ sends $x\mapsto 0$ and $y\mapsto x^jy^i$. Now let $f_x=d_0\ten y$ and $f_y=x\ten d_0'$. Using the result \cite[Proposition 2.5.10]{LinBlockI} as inspiration, we then look for a basis of $HH^1(kP)^E$ by considering the $E$-orbit sums of $X$ under the induced action on $HH^1(kP)$, $\ ^zD(g)=z\cdot D(z^{-1}\cdot g)$ for all $z\in E, g\in P$ and $D\in HH^1(kP)$.
	
We denote by $\Tr_1^E:HH^1(kP)\to HH^1(kP)^E$ the relative trace map, and define $D_z=\Tr_1^E(f_z)$ for $z=x$ or $y$. This gives the following $4$ maps; $D_x$ sends $x\mapsto 1+y$ and $y\mapsto 1+xy$, whilst $D_y$ sends $x\mapsto 1+xy$ and $y\mapsto 1+x,$ and one checks that $\{D_x, D_y\}$ forms a Lie algebra basis for $HH^1(kP)^E$. A simple verification shows that $[D_x,D_y]=0$, completing the proof of (i).

For the blocks in (iii) and (iv), and the non-principal block $B_1$ in (ii), Theorems \ref{stabeqThrm}(iii) and \ref{fixptisom} provide the isomorphisms, and Corollary \ref{dimprop} provides the dimensions. The dimension of the $HH^1$ of the principal block in (ii) is then verified using Table \ref{tabdimkg}. To see that this principal block has $HH^1$ a solvable Lie algebra, we apply results of Koshitani and Waki \cite[Theorem 1]{KoshWak} to Theorem \ref{ExtThrm}: the former gives us the dimensions $\dim_k(\Ext_{kG}^1(S,T))$ for all simple $kG$-modules $S$ and $T$, and this meets the solvability criteria of the latter.

\end{proof}

\begin{Theorem}\label{ThrmM22}
Let $G=M_{22}$. Then $HH^1(kG)$ is a solvable Lie algebra, and the following isomorphisms of Lie algebras with given dimensions hold.
\begin{enumerate}
\item[(i)] Let $p=2$. Then $kG$ is indecomposable and $HH^1(kG)$ is a $9$-dimensional solvable Lie algebra.
\item [(ii)] Let $p=3$. Then there is an isomorphism of $2$-dimensional non-abelian Lie algebras $$HH^1(B_0)\cong HH^1(k(C_3^2\rtimes Q_8)),$$ and there are isomorphisms of $1$-dimensional Lie algebras $$HH^1(B_1)\cong HH^1(kS_3)\cong k.$$
\item [(iii)] Let $p=5$. Then there are isomorphisms of $1$-dimensional Lie algebras $$HH^1(kG)\cong HH^1(B_0)\cong HH^1(C_5)^{C_4}\cong k.$$
\item [(iv)] Let $p=7$. Then there are isomorphisms of $2$-dimensional non-abelian Lie algebras $$HH^1(kG)\cong HH^1(B_0)\cong HH^1(kC_7)^{C_3}.$$
\item [(v)] Let $p=11$. Then there are isomorphisms of $2$-dimensional non-abelian Lie algebras $$HH^1(kG)\cong HH^1(B_0)\cong HH^1(kC_{11})^{C_5}.$$ 
\end{enumerate}
\end{Theorem}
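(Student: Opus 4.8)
The plan is to separate the five primes according to the defect of the principal block, disposing of the cyclic-defect cases by the general theory of Section \ref{PrE} and concentrating the real work on the two primes $p=2,3$ at which the principal block has non-cyclic defect. The global solvability assertion then follows prime by prime, since a direct sum of solvable Lie algebras is solvable and every $HH^1(B_i)$ of dimension at most $2$ is automatically solvable; the only summand demanding a genuine argument is the $9$-dimensional one at $p=2$.

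First I would clear the primes $p=5,7,11$ uniformly. Table \ref{tabdefect} shows that the principal block $B_0$ is the only block of positive defect, with cyclic defect group $C_p$ and inertial quotient $C_4$, $C_3$, $C_5$ respectively; every remaining block has defect $0$ and hence vanishing $HH^1$. Thus $HH^1(kG)\cong HH^1(B_0)$, and Theorems \ref{stabeqThrm}(iii) and \ref{fixptisom} identify this with $HH^1(kC_p)^E$. Corollary \ref{dimprop} then supplies the dimension ($1$ for $p=5$, $2$ for $p=7,11$) and Theorem \ref{preformula} exhibits the non-abelian bracket in the two-dimensional cases $p=7,11$, exactly as in the analogous parts of Theorems \ref{ThrmM11} and \ref{ThrmM12}. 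The non-principal $3$-block $B_1$, having cyclic defect $C_3$ with inertial quotient $C_2$, is treated identically to give $HH^1(B_1)\cong HH^1(kC_3)^{C_2}\cong HH^1(kS_3)\cong k$.

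Next I would address the principal $3$-block $B_0$, whose defect group is $P\cong C_3\times C_3$ with inertial quotient $E\cong Q_8$. Following the template of Theorem \ref{ThrmM11}(ii), I would invoke a stable equivalence of Morita type between $B_0$ and its Brauer correspondent $kN_G(P)=k(P\rtimes E)=k(C_3^2\rtimes Q_8)$, as furnished by Rouquier \cite{Rouqstable} for principal blocks with abelian defect, yielding the Lie-algebra isomorphism $HH^1(B_0)\cong HH^1(k(C_3^2\rtimes Q_8))$. The dimension is then pinned to $2$ by comparison with the centraliser-decomposition total in Table \ref{tabdimkg} (namely $3=2+1+0$), and the non-abelian structure would be confirmed by an explicit computation of $\Der/\IDer$ on the small group $C_3^2\rtimes Q_8$, carried out directly or in \texttt{GAP}.

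The main obstacle is $p=2$. Here $kG=B_0$ is indecomposable with Sylow $2$-subgroup of order $2^7$, which is non-cyclic (indeed of wild representation type), so neither the Section \ref{PrE} machinery nor a small Brauer correspondent is available. The dimension $9$ is delivered directly by the centraliser decomposition (Table \ref{tabdimkg}); the delicate point is solvability, for which my plan is to apply the Ext-criterion of Theorem \ref{ExtThrm}. I would read off the Ext-quiver of the principal $2$-block from the decomposition matrix of $M_{22}$ recorded in the Modular Atlas \cite{ModAtlas,AtlasBC} and verifiable in \texttt{GAP}, then check that $\dim_k\Ext^1_{kG}(S,S)=0$ for every simple $kG$-module $S$ and that $\dim_k\Ext^1_{kG}(S,T)\le 1$ for all non-isomorphic pairs $S,T$. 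Verifying these bounds for every pair of simples lying in the single block is the technical crux of the theorem; should some pair fail the bound, the fallback would be to deduce solvability from a known derived equivalence for this block or, failing that, from a direct computation of $\Der(kG)/\IDer(kG)$ as a Lie algebra.
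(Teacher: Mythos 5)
Your treatment of parts (ii)--(v) matches the paper's: cyclic-defect blocks are handled by Theorems \ref{stabeqThrm}(iii) and \ref{fixptisom} together with Corollary \ref{dimprop}, the non-principal $3$-block via $HH^1(kC_3)^{C_2}$, and the principal $3$-block via Rouquier's stable equivalence with $k(C_3^2\rtimes Q_8)$ followed by an explicit bracket computation (the paper carries this out by hand, using the Le--Zhou decomposition of $HH^1(k(C_3\times C_3))$ and relative trace maps to produce a basis $\{D_x,D_y\}$ with $[D_x,D_y]=D_y-D_x$; deferring this to a direct or machine computation is a legitimate variant).

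The gap is in your primary route to solvability at $p=2$. You propose to ``read off the Ext-quiver of the principal $2$-block from the decomposition matrix.'' This step would fail: the decomposition matrix determines only the Cartan matrix $C=D^{t}D$, i.e.\ the total composition multiplicities $\dim_k\Hom_{kG}(P_S,P_T)$ in the projective indecomposables, whereas $\Ext^1_{kG}(S,T)$ is the multiplicity of $T$ in the \emph{second radical layer} $\rad(P_S)/\rad^2(P_S)$. Two blocks with the same decomposition matrix can have different quivers, so the bounds $\dim_k\Ext^1(S,S)=0$ and $\dim_k\Ext^1(S,T)\le 1$ required by Theorem \ref{ExtThrm} cannot be certified from decomposition data alone. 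The paper closes this by citing Hoffman's explicit construction of the basic algebra (hence the quiver) of the principal block over $\F_2$ \cite{Hoff}, combined with the fact that $\F_2$ is a splitting field \cite{James}; alternatively the socle series of the projective indecomposables for the $3$-fold cover in \cite{LemSta} yield the same $\Ext^1$ information. Your fallback of a direct computation of $\Der(kG)/\IDer(kG)$ would in principle work but is not what carries the argument; the missing ingredient is a source for the actual radical-layer (quiver) data rather than the decomposition matrix.
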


\begin{table}[H]
	\centering
		\caption{ Data on the $p$-blocks of $kM_{22}$.  }\label{tabM22}
	\begin{tabular}{l|l|l|l|l|l}
		$G=M_{22}$		& $p=2$	& $p=3$	& $p=5$	& $p=7$ & $p=11$ \\
		\hline
		\hline
		No. of blocks, $B_i$ of $kG$ & 1 & 5 & 8 & 8 & 6 \\
		Defects of the $B_i$	& $[7]$	& $[2,1,0^3]$	& $[1,0^7]$	& $[1,0^7]$ & $[1,0^5]$	 \\
		Dimensions $\dim_k(HH^1(B_i))$ 	& $[9]$	& $[2,1,0^3]$	& $[1,0^7]$	& $[2,0^7]$ & $[2,0^5]$ \\
		Sylow $p$-subgroup $P$ of $G$  	& $C_2^4\rtimes D_8$	& $C_3^2$	& $C_5$ & $C_7$	& $C_{11}$ \\
		$E=N_G(P)/PC_G(P)$  	& $\{1\}$	& $Q_8$	& $C_4$	& $C_3$ & $C_5$	
	\end{tabular} 
\end{table}

\begin{proof}
	Let $G=M_{22}$. As before the proof refers to Section \ref{PrE}.
	
	By the data in Table \ref{tabM22} one sees that for all primes $p\neq3$ dividing $|G|$ the block structure of $kG$ provides the isomorphism $HH^1(kG)\cong HH^1(B_0)$. The dimensions $\dim_k(HH^1(B_0))$ are therefore simply the same as the dimensions of $HH^1(kG)$, which are calculated using the centraliser decomposition. 

    In the case $p=2$, to see that $HH^1(kG)$ is a solvable Lie algebra, we use the quiver for $B_0$ constructed by Hoffman \cite{Hoff} over $\F_2$ along with the fact that the simple $kG$-modules in $B_0$ have as a splitting field $\F_2$ (see \cite{James}) and the result follows by Theorem \ref{ExtThrm}. We note that in \cite{LemSta} Lempken and Staszewski construct the socle series for the projective indecomposable modules in the principal block of the 3-fold cover of $M_{22}$, from which the result may also be deduced.
	
	In the case $p=3$ we need only use Theorems \ref{stabeqThrm} and \ref{fixptisom}, and Corollary \ref{dimprop} to determine the isomorphisms $HH^1(B_1)\cong HH^1(kS_3)\cong HH^1(kC_3)^{C_2}$ and the Lie algebra dimension, which via Table \ref{tabdimkg} allows us to calculate $\dim_k(HH^1(B_0))=\dim_k(HH^1(kG))-\dim_k(HH^1(B_1))=2$.  A Sylow 3-subgroup $P$ is isomorphic to $C_3\times C_3$, with inertial quotient $E=Q_8$. As before, a result of Rouquier then \cite{Rouqstable} gives us a stable equivalence of Morita type between $B_0$ and its Brauer correspondent $kN_G(P)=P\rtimes E$, (and again there is derived equivalence between $B_0$ and $kN_G(P)$ constructed explicitly by Okuyama in his unpublished result \cite{OkDerEq}). 
	
	To show that $HH^1(B_0)$ is the unique (up to isomorphism) non-abelian 2-dimensional Lie algebra, we generalise some ideas that will be seen in the sequel, regarding blocks with a cyclic defect group, in particular Theorem \ref{preformula}. One verifies that the action of $E$ on $P$ is Frobenius, so the inclusion of $HH^1(kP)^E$ in $HH^1(k(P\rtimes E))$ (Theorem \ref{fixptisom} below) is an equality. We use a result of Le and Zhou \cite[Corollory 3.4]{LeZhou} to find a  basis of size $18$ for $HH^1(kP)$, and a result of Linckelmann \cite[Proposition 2.5.10]{LinBlockI} to find a basis of size $2$ for $HH^1(kP)^{E}$, and in particular we show that this basis has a nontrivial Lie bracket.
	
Let $P$ have as generators $x$ and $y$, and $E$ be generated by $r$ and $s$ such that $sr=r^3s$, and write the action of $E$ on $P$ as $r\cdot x=y^2$, $r\cdot y=x$, $s\cdot x=x^2y$ and $s\cdot y=xy$. Let $P_1=\langle x\rangle$ and $P_2=\langle y\rangle$. Then the result \cite[Corollory 3.4]{LeZhou} tells us that there is an isomorphism of Lie algebras $HH^1(kP)\cong HH^1(kP_1)\tenk kP_2\oplus kP_1\tenk HH^1(kP_2).$ Let $d_i$ (resp. $d_i'$) be the derivation on $kP_1$ (resp. $kP_2$) sending $x\mapsto x^i$ (resp. $y\mapsto y^i$) so that $\{d_0,d_1,d_2\}$ is a $k$-basis for $HH^1(kP_1)$ as a Lie algebra (resp. $\{d_0',d_1',d_2'\}$ for $HH^1(kP_2)$). A basis for $HH^1(kP)$ is then given by $X=\{d_i\ten y^j, x^j\ten d_i' \ | \ i,j=0,1,2\}$, where $d_i\ten y^j$ sends $x\mapsto x^iy^j$ and $y\mapsto 0$, whilst $x^j\ten d_i'$ sends $x\mapsto 0$ and $y\mapsto x^jy^i$. Now let $f_x=d_0\ten y$ and $f_y=x\ten d_0'$. Using the result \cite[Proposition 2.5.10]{LinBlockI} as inspiration, we look for a basis of $HH^1(kP)^E$ by considering the $E$-orbit sums of $X$ under the induced action on $HH^1(kP)$, $\ ^zD(g)=z\cdot D(z^{-1}\cdot g)$ for all $z\in E, g\in P$ and $D\in HH^1(kP)$.
	
We denote by $\Tr_1^E:HH^1(kP)\to HH^1(kP)^E$ the relative trace map, and define $D_z=\Tr_1^E(f_z)$ for $z=x$ or $y$. This gives the following. The map $D_x$ sends $x\mapsto 1+y+2x^2+2xy+xy^2+2x^2y^2$ and $y\mapsto1+x^2+2y^2+xy+2x^2y+2xy^2$, whilst $D_y$ sends $x\mapsto1+2x^2+y^2+xy+2xy^2+2x^2y$ and $y\mapsto1+x+2y^2+2xy+x^2y+2x^2y^2$ and one checks that $\{D_x,D_y\}$ does indeed form a Lie algebra basis for $HH^1(kP)^E$. A simple verification shows that $[D_x,D_y]=D_y-D_x$, completing the $p=3$ case.

For $p\in\{5,7,11\}$, Theorems \ref{stabeqThrm}(iii) and \ref{fixptisom} provide the second isomorphism and Corollary \ref{dimprop} provides the dimensions.
\end{proof}

\begin{Theorem}\label{ThrmM23}
Let $G=M_{23}$. Then the following isomorphisms of Lie algebras with given dimensions hold.
\begin{enumerate}
\item[(i)] Let $p=2$. Then there is an isomorphism of solvable $9$-dimensional Lie algebras $$HH^1(kG)\cong HH^1(B_0).$$
\item [(ii)] Let $p=3$. Then there is an isomorphism of $2$-dimensional Lie algebras  $$HH^1(B_0)\cong HH^1(k(C_3^2\rtimes SD_{16})),$$ and an isomorphism of $3$-dimensional simple Lie algebras $$HH^1(B_1)\cong HH^1(kC_3).$$ 
\item [(iii)] Let $p=5$. Then there are isomorphisms of $1$-dimensional Lie algebras $$HH^1(B_0)\cong HH^1(kC_5)^{C_4},$$ and there is an isomorphism of $2$-dimensional non-abelian Lie algebras $$HH^1(B_1)\cong HH^1(kC_5)^{C_2}.$$ 
\item [(iv)] Let $p=7$. Then there are isomorphisms of $2$-dimensional non-abelian Lie algebras $$HH^1(B_0)\cong HH^1(B_1)\cong HH^1(kC_7)^{C_3}.$$ 
\item [(v)] Let $p=11$. Then there are isomorphisms of $2$-dimensional non-abelian Lie algebras $$HH^1(kG)\cong HH^1(B_0)\cong HH^1(kC_{11})^{C_5}.$$ 
\item [(vi)] Let $p=23$. Then there are isomorphisms of $2$-dimensional non-abelian Lie algebras $$HH^1(kG)\cong HH^1(B_0)\cong HH^1(kC_{23})^{C_{11}}.$$ 
\end{enumerate}
\end{Theorem}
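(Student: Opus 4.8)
The plan is to dispatch the six primes using the block structure of Table~\ref{tabdefect}, the inertial data of Table~\ref{tabM23}, and the total dimensions of Table~\ref{tabdimkg}, exactly as in the proofs of Theorems~\ref{ThrmM11}, \ref{ThrmM12} and~\ref{ThrmM22}; the bulk of the statement reduces to the cyclic-defect machinery of Section~\ref{PrE}. For $p\in\{5,7,11,23\}$, and for the non-principal block $B_1$ when $p=3$, every block of positive defect has a non-trivial cyclic defect group $C_p$, so Theorems~\ref{stabeqThrm}(iii) and~\ref{fixptisom} furnish isomorphisms $HH^1(B_i)\cong HH^1(kC_p)^{E_i}$ with $E_i$ the inertial quotient recorded in Table~\ref{tabM23}, while Corollary~\ref{dimprop} supplies the dimensions (equivalently $\dim_k HH^1(kC_p)^{C_e}=(p-1)/e$ for $e>1$ and $\dim_k HH^1(kC_p)=p$ for $e=1$, which one checks against the weight grading of the Witt algebra). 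Theorem~\ref{preformula} then pins down the Lie bracket on each $HH^1(kC_p)^{E_i}$: the $1$-dimensional algebras (such as $HH^1(B_0)\cong HH^1(kC_5)^{C_4}$ at $p=5$) are abelian; the $2$-dimensional algebras are the unique non-abelian $2$-dimensional Lie algebra; and at $p=3$ the block $B_1$ is nilpotent ($E_1$ trivial), so $HH^1(B_1)\cong HH^1(kC_3)$ is the full $3$-dimensional Jacobson--Witt algebra $\cong\mathfrak{sl}_2(k)$, which is simple for $p=3$. Wherever Table~\ref{tabdefect} forces the remaining blocks to have defect zero---at $p=2$ ($[7,0^2]$), $p=11$ ($[1,0^{10}]$) and $p=23$ ($[1,0^4]$)---the vanishing of $HH^1$ on defect-zero (matrix) blocks gives $HH^1(kG)\cong HH^1(B_0)$, with dimension read from Table~\ref{tabdimkg}; the resulting totals ($1+2=3$ at $p=5$, $2+2=4$ at $p=7$) are consistent with that table. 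This settles (iii)--(vi) and the non-principal part of (ii).

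For the principal block at $p=3$ (the remainder of (ii)), the defect group is $P\cong C_3^2$ with inertial quotient $E=SD_{16}$, precisely as for $M_{11}$ at $p=3$. I would invoke Rouquier's theorem~\cite{Rouqstable} to obtain a stable equivalence of Morita type between $B_0$ and its Brauer correspondent $kN_G(P)=k(C_3^2\rtimes SD_{16})$, whence $HH^1(B_0)\cong HH^1(k(C_3^2\rtimes SD_{16}))$. Its dimension is $2$, as already computed in Theorem~\ref{ThrmM11}(ii), and this is cross-checked against Table~\ref{tabdimkg} via $\dim_k HH^1(B_0)=\dim_k HH^1(kG)-\dim_k HH^1(B_1)=5-3=2$.

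The main obstacle is (i), the $2$-modular principal block $B_0$ of dimension $9$, where I must establish solvability of $HH^1(B_0)\cong HH^1(kG)$. The strategy mirrors Theorem~\ref{ThrmM22}(i): apply the criterion of Theorem~\ref{ExtThrm}, for which it suffices to verify that $\dim_k\Ext^1_{B_0}(S,S)=0$ and $\dim_k\Ext^1_{B_0}(S,T)\le 1$ for all non-isomorphic simple $B_0$-modules $S,T$. I would extract the Ext-quiver of $B_0$ from the decomposition and Cartan data available in \texttt{GAP} and the modular Atlas, first identifying the splitting field ($\F_2$ or $\F_4$) of the simple modules in $B_0$ so that these $\Ext^1$-dimensions over the finite field agree with those over $k$. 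The delicate point is confirming the absence of loops in the quiver and the bound $\Ext^1(S,T)\le 1$ directly from this data (extracting them, if necessary, from the socle series of the projective indecomposable modules in $B_0$, as in the discussion of \cite{LemSta} used for $M_{22}$); once both numerical conditions are verified, Theorem~\ref{ExtThrm} yields solvability of the $9$-dimensional Lie algebra immediately.
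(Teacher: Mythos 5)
Your proposal is correct and follows essentially the same route as the paper: cyclic-defect machinery (Theorems \ref{stabeqThrm}, \ref{fixptisom}, Corollary \ref{dimprop}) for $p\in\{5,7,11,23\}$ and for $B_1$ at $p=3$ (where the single Brauer character forces a trivial inertial quotient), Rouquier's stable equivalence for the principal $3$-block, and the solvability criterion of Theorem \ref{ExtThrm} at $p=2$. The only step you leave as a to-do --- verifying $\Ext^1(S,S)=0$ and $\dim\Ext^1(S,T)\le 1$ for the principal $2$-block from the socle series of the projective indecomposables over the splitting field $\F_2$ --- is exactly what the paper supplies by citing Lux and Wiegelmann \cite[Theorem 4.7]{LuxWieg} together with James \cite{James}.
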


\begin{table}[H]
	\centering
	\caption{ Data on the $p$-blocks of $kM_{23}$ }\label{tabM23}
	\begin{tabular}{l|l|l|l|l|l|l}
		$G=M_{23}$		& $p=2$	& $p=3$	& $p=5$	& $p=7$ & $p=11$ & $p=23$ \\
		\hline
		\hline
		No. of blocks, $B_i$ of $kG$ & 3 & 7 & 10 & 9 & 11 & 14 \\
		Defects of the $B_i$ 				&  $[7,0^2]$			& $[2,1,0^5]$	& $[1^2,0^8]$	& $[1^2,0^7]$	& $[1,0^{10}]$	 & $[1,0^4]$ \\
		Dimensions $\dim_k(HH^1(B_i))$ 		&  $[9,0^2]$			& $[2,3,0^5]$	& $[1,2,0^8]$	& $[2,2,0^7]$   & $[2,0^{10}]$   & $[2,0^4]$ \\
		Sylow $p$-subgroup $P$ of $G$  					& $C_2^4\rtimes D_8$	& $C_3^2$		& $C_5$ 		& $C_7$			& $C_{11}$		 & $C_{23}$  \\
		$E=N_G(P)/PC_G(P)$  	& $\{1\}$				& $SD_{16}$		& $C_4$			& $C_3$			& $C_5$			 & $C_{11}$
	\end{tabular} 
\end{table}

\begin{Remark}
Although the principal $2$-block of $kM_{22}$ and $kM_{23}$  have isomorphic defect groups and $HH^1$ of equal dimension, they are not derived equivalent: $kM_{22}$ and $kM_{23}$ have centres of dimension $12$ and $17$ respectively, whence the principal blocks have centres of dimension $12$ and $15$ respectively.
\end{Remark}

\begin{proof}
	Let $G=M_{23}$. The proof refers to Section \ref{PrE}.
	
	By the data in Table \ref{tabM23} one sees that for $p\in\{2,11,23\}$, the block structure of $kG$ provides the isomorphism $HH^1(kG)\cong HH^1(B_0)$ and so the dimensions $\dim_k(HH^1(B_0))$ are therefore the same as the dimensions of $HH^1(kG)$, calculated using the centraliser decomposition. For the primes $p=11$ and $p=23$ this can be verified using the isomorphisms of Theorems \ref{stabeqThrm}(iii) and \ref{fixptisom}, followed by Corollary \ref{dimprop}. These results also complete the case for $p=5$ or $7$.
	
	To see that the principal $2$-block has $HH^1$ a solvable Lie algebra we use \cite[Theorem 4.7]{LuxWieg} which constructs the projective indecomposable $\F_2G$-modules and shows us that $\Ext_{\F_2G}^1(S,S)=\{0\}$ and $\dim_{\F_2}(\Ext_{\F_2G}^1(S,T))\leq 1$ for all simple $\F_2G$-modules $S$ and $T$. The field $\F_2$ is a splitting field for the simple $kG$-modules in $B_0$ (see \cite{James}) and so by Theorem \ref{ExtThrm} we get that $HH^1(B_0)$ is solvable.

	Now let $p=3$. A Sylow 3-subgroup $P$ is isomorphic to $C_3\times C_3$, with inertial quotient $E=SD_{16}$ as in Table \ref{tabM23}. By a result of Rouquier \cite{Rouqstable} there is a stable equivalence of Morita type between $B_0$ and its Brauer correspondent $kN_G(P)=P\rtimes E$, (and as before there is derived equivalence between $B_0$ and $kN_G(P)$ constructed explicitly by Okuyama in an unpublished result). Whence $HH^1(B_0)\cong HH^1(k(P\rtimes E))$, the dimensions of which may be calculated using the centraliser decomposition. The non-principal block $B_1$ with non-trivial defect has $1$ Brauer character, so that the inertial quotient of $B_1$ is trivial, and by Theorem \ref{stabeqThrm} the given isomorphism holds.
	
\end{proof}

\begin{Remark}
Let $p=3$. The isomorphism of $3$-dimensional simple Lie algebras $HH^1(B_1)\cong HH^1(kC_3)$ agrees with results of Waki \cite[Theorem 2.4]{Waki} and Linckelmann and Rubio y Degrassi \cite[Theorem 1.3]{LinRubII}: the former shows that $B_1$ contains a unique isomorphism class of simple $kG$-modules with representative $S$, of dimension $231$, whose Loewy structure gives $\dim_k(\Ext_{kG}^1(S,S))=1$. The latter then tells us that if $HH^1(B_1)$ is simple, then it must be nilpotent. 
\end{Remark}

\begin{Theorem}\label{ThrmM24}
Let $G=M_{24}$.  Then the following isomorphisms of Lie algebras with given dimensions hold. 
\begin{enumerate}
\item[(i)] Let $p=2$. Then $kG$ is indecomposable and $HH^1(kG)$ is a $22$-dimensional solvable Lie algebra.
\item [(ii)] Let $p=3$. Then $\dim_k(HH^1(B_0))=6$ and there are isomorphisms of $1$-dimensional Lie algebras $$HH^1(B_i)\cong HH^1(kS_3)\cong k$$ for $i=1,\ldots,4$.

\item [(iii)] Let $p=5$. Then there are isomorphisms of $1$-dimensional Lie algebras $$HH^1(B_0)\cong HH^1(B_1)\cong HH^1(kC_5)^{C_4}\cong k,$$ and there is an isomorphism of $2$-dimensional non-abelian Lie algebras $$HH^1(B_2)\cong HH^1(kC_5)^{C_2}.$$ 
\item [(iv)] Let $p=7$. Then there are isomorphisms of $2$-dimensional non-abelian Lie algebras $$HH^1(B_0)\cong HH^1(B_1)\cong HH^1(B_2)\cong HH^1(kC_7)^{C_3}.$$
\item [(v)] Let $p=11$. Then there are isomorphisms of $1$-dimensional Lie algebras $$HH^1(kG)\cong HH^1(B_0)\cong k.$$
\item [(vi)] Let $p=23$. Then there are isomorphisms of $2$-dimensional non-abelian Lie algebras $$HH^1(kG)\cong HH^1(B_0)\cong HH^1(kC_{23})^{C_{11}}.$$
\end{enumerate}
\end{Theorem}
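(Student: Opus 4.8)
The plan is to follow the template of the proofs of Theorems \ref{ThrmM11}--\ref{ThrmM23}. By Table \ref{tabdefect} every block of non-zero defect here has a non-trivial \emph{cyclic} defect group of order $p$ except the principal blocks at $p=2$ and $p=3$, so I would first dispose of all the cyclic-defect blocks by reducing each to a computation of $HH^1(kC_p)^E$ for the appropriate inertial quotient $E=N_G(P)/PC_G(P)$ read off from Table \ref{tabM24}. For such a block $B$, Theorem \ref{stabeqThrm}(iii) supplies a stable equivalence of Morita type between $B$ and its Brauer correspondent $k(P\rtimes E)$ inducing $HH^1(B)\cong HH^1(k(P\rtimes E))$, Theorem \ref{fixptisom} identifies the latter with $HH^1(kP)^E$, and Corollary \ref{dimprop} gives the dimension, which for these non-trivial inertial quotients equals $(p-1)/|E|$. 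Theorem \ref{preformula} then provides the explicit basis and Lie bracket of $HH^1(kC_p)^E$ that decides abelian versus non-abelian in the $2$-dimensional cases.

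Executing this prime by prime: for $p=5$ the two blocks with inertial quotient $C_4$ give $HH^1\cong HH^1(kC_5)^{C_4}\cong k$ while the block with inertial quotient $C_2$ gives the $2$-dimensional $HH^1(kC_5)^{C_2}$, non-abelian by the bracket of Theorem \ref{preformula}; for $p=7$ all three blocks have inertial quotient $C_3$, yielding the $2$-dimensional non-abelian $HH^1(kC_7)^{C_3}$; for $p=11$ the single (principal) block has inertial quotient $C_{10}$ (the full automorphism group of $C_{11}$, which is why the dimension is $(11-1)/10=1$ rather than the $2$ seen for the smaller Mathieu groups), so $HH^1(B_0)\cong HH^1(kC_{11})^{C_{10}}\cong k$; and for $p=23$ the principal block has inertial quotient $C_{11}$, giving the $2$-dimensional non-abelian $HH^1(kC_{23})^{C_{11}}$. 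In the last two cases Table \ref{tabdefect} shows $kG$ has a unique block of non-zero defect, so $HH^1(kG)\cong HH^1(B_0)$ with dimension matching Table \ref{tabdimkg}. For $p=3$ the four non-principal blocks of non-zero defect all have defect group $C_3$ with inertial quotient $C_2$, whence $HH^1(B_i)\cong HH^1(kC_3)^{C_2}\cong HH^1(kS_3)\cong k$; the principal block dimension then follows by subtraction from the total $\dim_k HH^1(kG)=10$ of Table \ref{tabdimkg}, giving $10-4\cdot 1=6$. No solvability is claimed for this $6$-dimensional block (see the remark following Theorem \ref{ThrmMain}), so the dimension count suffices there.

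The genuinely harder case is $p=2$, and I would treat it last. Here Table \ref{tabdefect} shows $kG$ is indecomposable, a single block of full defect $10$, so $HH^1(kG)$ is the $22$-dimensional space delivered by the centraliser decomposition and none of the cyclic-defect machinery applies. To obtain solvability I would invoke Theorem \ref{ExtThrm}: after checking, as for $M_{22}$ and $M_{23}$, that $\Ftwo$ is a splitting field for the simple modules in the (principal) $2$-block, it suffices to verify that $\Ext^1_{kG}(S,S)=0$ and $\dim_k\Ext^1_{kG}(S,T)\le 1$ for all pairs of non-isomorphic simples. I expect the main obstacle to be exactly this: locating and verifying the full $\Ext^1$-quiver of the principal $2$-block of $M_{24}$ -- the largest module category appearing in the paper -- from the literature on its projective indecomposables and Loewy structure. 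Once that $\Ext$-data is in hand, Theorem \ref{ExtThrm} closes the case precisely as in the $M_{22}$ and $M_{23}$ arguments.
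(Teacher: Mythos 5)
Your proposal is correct and follows essentially the same route as the paper: cyclic-defect blocks are handled via Theorems \ref{stabeqThrm}(iii) and \ref{fixptisom} and Corollary \ref{dimprop} (with the inertial quotients read off from the Brauer character counts), the principal $3$-block dimension by subtraction, and the principal $2$-block by the centraliser decomposition plus Theorem \ref{ExtThrm}. The one step you flag as outstanding --- the $\Ext^1$ data for the principal $2$-block --- is supplied in the paper by the socle series of the projective indecomposable $\F_2M_{24}$-modules in \cite[Section 5]{Selin}, together with \cite{James} for the fact that $\F_2$ is a splitting field.
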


\begin{Remark}
Let $p=3$. Similar to Remark \ref{UsRmrk}, the result of Usami \cite[Proposition 4]{Usami} gives a perfect isometry between the principal blocks of $kM_{24}$ and $He$, the Held group. On the other hand no derived equivalence seems to exist in the literature and so we cannot determine any further the Lie algebra structure of $HH^1(B_0(kM_{24}))$
\end{Remark}

\begin{table}[H]
	\centering
		\caption{ Data on the $p$-blocks of $kM_{24}$   }\label{tabM24}
		\begin{tabular}{l|l|l|l|l|l|l}
		$G=M_{24}$		& $p=2$	& $p=3$	& $p=5$	& $p=7$ & $p=11$ & $p=23$ \\
		\hline
		\hline
		No. of blocks, $B_i$, of $kG$ & 1 & 8 & 15 & 14 & 16 & 14 \\
		Defects of the $B_i$ 	&  $[10]$				& $[3,1^4,0]$	& $[1^3,0^{12}]$	& $[1^3,0^{11}]$	     & $[1,0^{15}]$	 & $[1,0^{13}]$ \\
		Dimensions  $\dim_k(HH^1(B_i))$ 		&  $[22]$				& $[6,1^4,0]$	& $[1^2,2,0^{12}]$	& $[2^3,0^{11}]$  	 & $[1,0^{15}]$  & $[2,0^{13}]$ \\
		Sylow $p$-subgroup $P$ of $G$ 	&  \tn{Order 1024} & $3_+^{1+2}$ & $C_5$ & $C_7$ & $C_{11}$ & $C_{23}$  \\
		$E=N_G(P)/PC_G(P)$	& $\{1\}$				& $D_{8}$		& $C_4$			& $C_3$			& $C_{10}$			 & $C_{11}$
	\end{tabular} 
\end{table}

\begin{proof}
	Let $G=M_{24}$. For $p=2$ the case is clear, and we use the centraliser decomposition to compute the dimension of $HH^1(B_0)$. We note that from \texttt{GAP} a Sylow $2$-subgroup of $G$ has a structural description isomorphic to $(((D_8^2\rtimes C_2)\rtimes C_2)\rtimes C_2)\rtimes C_2$.
	
	To see that the principal $2$-block has $HH^1$ a solvable Lie algebra we use \cite[Section 5]{Selin} which provides the socle series' for the projective indecomposable $\F_2G$-modules, from which it may be read that $\Ext_{\F_2G}^1(S,S)=\{0\}$ and $\dim_{\F_2}(\Ext_{\F_2G}^1(S,T))\leq 1$ for all simple $\F_2G$-modules $S$ and $T$. By James \cite{James}, $\F_2$ is a splitting field for $G$ and so by Theorem \ref{ExtThrm}, $HH^1(B_0)$ is solvable.
	
	Now let $p=3$. A Sylow 3-subgroup $P$ is isomorphic to $3_+^{1+2}$, the extraspecial group of order 27 with exponent 3. The blocks $B_i$, $i=1,\ldots,4$ have 2 Brauer characters so that the inertial quotients are isomorphic to $C_2$ in this case, and the isomorphisms and given dimensions hold by Theorems \ref{stabeqThrm}(iii) and \ref{fixptisom}, and Corollary \ref{dimprop}. We calculate $\dim_k(HH^1(B_0))=\dim_k(HH^1(kG))-\sum_{i=1}^4\dim_k(HH^1(B_i))$ using the centraliser and block decomposition of $kG$. This proves (ii).
	
	For all remaining primes, we use the block structure and centraliser decomposition to determine a counting argument, the number of Brauer characters to determine the cyclic inertial quotient $E$, Theorems \ref{stabeqThrm}(iii) and \ref{fixptisom}, and Corollary \ref{dimprop} to get the Lie algebra isomorphisms and dimensions of the $HH^1$ of the blocks.
	
\end{proof}

\begin{proof}[Proof of Theorem \ref{ThrmMain}]
The result follows from Theorems \ref{ThrmM11}, \ref{ThrmM12}, \ref{ThrmM22}, \ref{ThrmM23} and \ref{ThrmM24}.
\end{proof}

\begin{Remark}
If $G$ is a sporadic Mathieu group of order divisible by $p$ then by a result of Fleischmann, Janiszczak and Lempken \cite{FJL} $G$ has a $p$-element $x$ such that $x\notin C_G(x)'$, the commutator subgroup of $C_G(x)$. As already stated this implies $HH^1(kG)\neq 0$ by the centraliser decomposition, for if $x\notin C_G(x)'$ then $C_G(x)/C_G(x)'$ has a direct factor that is a non-trivial $p$-group so that $\Hom(C_G(x),k)\neq0$ by Lemma \ref{quotLemma}. 

On the other hand, \cite{FJL} also implies that for all 2-cocycles $\alpha\in Z^2(G;k^\times)$ the Hochschild cohomology group $HH^1(k_\alpha G)\neq 0$; we have a twisted group algebra analogue to the centraliser decomposition (Proposition \ref{twistedcent}) and for any group $H$, if $g\in H$ is a $p$-element then $\alpha(h,g)=\alpha(g,h)$ for all $h\in C_H(g)$ (see \cite[2.6.1(iv)]{Kar}) so that the action of $C_H(g)$ on $V_{g,\alpha}$ in the twisted centraliser decomposition of $HH^1(k_\alpha H)$ is trivial. In particular for each sporadic Mathieu group $G$ there is a $p$-element $x\in G$ not contained in $C_G(x)'$ as determined in the result \cite{FJL}, and consequently there is a direct summand equal to $H^1(C_G(x);\Va)$ in the twisted centraliser decomposition of $HH^1(k_\alpha G)$ such that $H^1(C_G(x);\Va)\cong \Hom(C_G(x),k)\neq0$.

We note that more work is necessary to determine the block structure of the $k_\alpha G$ and the dimensions of the $HH^1$ of $k_\alpha G$ and its blocks.
\end{Remark}

\section{The $HH^1$ of blocks with nontrivial cyclic defect group}\label{PrE}

The theory of blocks with cyclic defect group is well understood (see for example \cite{CurRein, LinBlockII, SieWith}) and the $HH^1$ of such blocks has a Lie algebra structure that is particularly simple to describe.

Let $G$ be a finite group and $k$ an algebraically closed field of characteristic $p$ dividing $|G|$. If $B=kGb$ is a block of $kG$ with block idempotent $b$ and a defect group $P$, and if $C=kN_G(P)c$ is the Brauer correspondent of $B$ for some idempotent $c$, then we form the inertial quotient of $B$, the group $E=N_G(P,f)/PC_G(P)$ where $f$ is a block of $kC_G(P)$ satisfying $fc=f$. The inertial quotient is unique up to isomorphism, and so independent of choice of $f$. If $B=B_0$ is the principal block, then $\Br_P(b)$ is the principal block of of $kC_G(P)$, so that $f=c$ and $E=N_G(P)/PC_G(P)$. Note also that if $P$ is abelian, $PC_G(P)=C_G(P)$.

The inertial quotient appears frequently in the context of blocks with cyclic defect groups, in view of the results stated below. Throughout, unless otherwise stated $P$ will denote a nontrivial cyclic group of order $p$ and $E$ a $p'$-subgroup of $\Aut(P)$, of order $e$. Let $x$ and $y$ be generators of $P$ and $E$ respectively, then $E$ acts on $P$ and we will write the action as $y\cdot x=x^s$ for some $s=2,\ldots,p-1$, such that $s^e=1(\mod \ p)$. Let $H=P\rtimes E$ denote the semidirect product. 

The group algebra $kH$ is a Nakayama (also selfinjective serial) algebra (see \cite[\S 11.3]{LinBlockII}), and in particular, if $E$ is non-trivial, then by Linckelmann and Rubio y Degrassi (\cite[Example 5.7]{LinRubII}) $HH^1(kH)$ is a solvable Lie algebra with nilpotent derived Lie subalgebra.

\begin{Theorem}\cite[11.1.1/3/11]{LinBlockII}\label{stabeqThrm}
Let $G$ be a finite group and $B$ be a block of $kG$ with a nontrivial
cyclic defect group $P$ and inertial quotient $E$, and let $H=P\rtimes E$. Then the following hold. \begin{itemize}
\item[(i)] The inertial quotient $E$ is cyclic of order dividing $p-1$, and acts freely on $P$. 
\item[(ii)] There is an equality $|E|=|\IBr_k(B)|$.
\item[(iii)] There is a derived equivalence between $B$ and $kH$.
\end{itemize} 
\end{Theorem}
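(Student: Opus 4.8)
The plan is to treat the three assertions separately: I would establish (i) by an elementary analysis of the inertial quotient, then prove the derived equivalence (iii), and finally deduce (ii) as a formal consequence of (iii).

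For (i), I would argue directly from the fact that $E$ is, by construction, isomorphic to a $p'$-subgroup of $\Aut(P)$. Since $P$ is cyclic, restriction to the unique subgroup $\Omega_1(P)\cong C_p$ of order $p$ gives a homomorphism $\rho\colon\Aut(P)\to\Aut(\Omega_1(P))\cong C_{p-1}$ whose kernel consists of the automorphisms $x\mapsto x^{1+pk}$, which form a $p$-group. As $|E|$ is prime to $p$, the map $\rho$ is injective on $E$, so $E$ is cyclic of order $e\mid p-1$. For freeness, suppose $\sigma\in E$ fixes some $x\in P$ with $x\neq1$; then $\sigma$ fixes every power of $x$, in particular the unique subgroup of order $p$, which lies in $\langle x\rangle$, so $\rho(\sigma)=\mathrm{id}$ and hence $\sigma=1$ by injectivity. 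Thus no nontrivial element of $E$ fixes a nonidentity element of $P$.

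The substance of the theorem is (iii), and this is where I expect the main obstacle. The strategy is to realise both algebras as Brauer tree algebras with identical numerical invariants and then invoke Rickard's classification. First, for $H=P\rtimes E$ with $E$ acting freely, $kH$ is a Nakayama (serial, self-injective) algebra whose Brauer tree is the star with $e$ edges and a central exceptional vertex of multiplicity $m=(|P|-1)/e$. Second, by Dade's structure theorem for blocks with cyclic defect group, $B$ is a Brauer tree algebra with the same number of edges $e$ and the same multiplicity $m$. Finally, Rickard's theorem, that every Brauer tree algebra is derived equivalent to the Brauer star algebra with the same pair $(e,m)$, yields $D^b(B)\simeq D^b(kH)$. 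The hard part is precisely this last input: Dade's structure theorem together with the construction of Rickard's tilting complex realising the equivalence. A more conceptual alternative proves Brou\'e's abelian defect group conjecture in the cyclic case, building a splendid stable equivalence of Morita type from the Green correspondence and lifting it to a derived equivalence (after Rouquier and Linckelmann); but in either approach the explicit homological construction is the technical heart.

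Granting (iii), statement (ii) is immediate. A derived equivalence $D^b(B)\simeq D^b(kH)$ induces an isomorphism of Grothendieck groups $K_0(D^b(B))\cong K_0(D^b(kH))$, whose ranks equal the respective numbers of simple modules, so $|\IBr_k(B)|=|\IBr_k(kH)|$. It remains to count the simple $kH$-modules. Since $P\normal H$ is a normal $p$-subgroup, $\rad(kP)\,kH$ is a nilpotent ideal annihilating every simple module, so the simple $kH$-modules coincide with those of $k[H/P]\cong kE$. As $E$ is a $p'$-group, $kE$ is semisimple, and as $E$ is cyclic with $k$ algebraically closed it decomposes as a product of $|E|$ copies of $k$; hence there are exactly $e=|E|$ simple $kH$-modules, giving $|E|=|\IBr_k(B)|$ as required.
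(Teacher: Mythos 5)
This statement is quoted verbatim from Linckelmann's book (\cite[11.1.1/3/11]{LinBlockII}) and the paper supplies no proof of its own, so the only comparison available is with the standard literature argument, which your sketch reproduces faithfully. Your proof of (i) is complete and correct: restriction gives a map $\Aut(P)\to\Aut(\Omega_1(P))\cong C_{p-1}$ with $p$-group kernel, hence injective on the $p'$-group $E$, and since $\Omega_1(P)\leq\langle x\rangle$ for every $x\neq 1$ in a cyclic $p$-group, any element of $E$ fixing such an $x$ dies under this injection; the one ingredient you use silently is that the inertial quotient $N_G(P,f)/PC_G(P)$ is a $p'$-group acting faithfully on $P$, which comes from the extended first main theorem rather than being a formality, and deserves a sentence. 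For (iii) you correctly isolate the two genuinely hard inputs --- Dade's structure theorem realising $B$ as a Brauer tree algebra with $e$ edges and exceptional multiplicity $m=(|P|-1)/e$, and Rickard's theorem that every such algebra is derived equivalent to the Brauer star algebra, which is exactly the symmetric Nakayama algebra $kH$ --- and you do not prove them, but neither does the paper, which defers to the same sources. The only quibble is with the logical ordering of (ii): your deduction of $|\IBr_k(B)|=|E|$ from the derived equivalence via ranks of Grothendieck groups, together with the count of simple $kH$-modules through inflation from the semisimple algebra $kE$, is valid; but the edge count $e=|E|$ that you feed into (iii) is already the content of (ii), since the number of simple modules of a Brauer tree algebra equals its number of edges. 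In the standard development (ii) is part of Dade's theorem and an input to (iii), not a corollary of it, so your route is redundant rather than circular --- worth flagging, but not a gap.
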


\noindent Derived equivalences preserve Hochschild cohomology so that in particular, with $B$ and $H$ as above, we have an isomorphism of restricted Lie algebras $HH^1(B)\cong HH^1(kH)$. The group $H$ is a Frobenius group by virtue of the free action of $E$ on $P$.

The next result is well-known and provides a characterisation of the $HH^1$ of blocks with a cyclic defect group in terms of the $HH^1$ of the group algebra $k(P\rtimes E)$ and the derivations of $HH^1(kP)$ that are fixed under the action of $E$ induced by the action on $P$. We point out that this theorem holds more generally for $P$ an elementary abelian $p$-group, and though the action of $E$ on $P$ is not necessarily Frobenius in this case the result may still be exploited as was seen in the proofs of Theorems \ref{ThrmM12}(i) and \ref{ThrmM22}(ii).

\begin{Theorem}\label{fixptisom}
Let $P$ be an elementary abelian $p$-group, $E\leq \Aut(P)$ and write $G=P\rtimes E$. Then the action of $E$ on $P$ induces an action on $HH^1(kP)$, and there is an inclusion of Lie algebras $$HH^1(kP)^E\hookrightarrow HH^1(kG).$$ Moreover if the action of $E$ on $P$ is a Frobenius action then this inclusion is an isomorphism.
\end{Theorem}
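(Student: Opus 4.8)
The plan is to realise the inclusion by an explicit extension of derivations, prove injectivity into $HH^1(kG)$ by a direct computation, and then close the Frobenius case by a dimension count via the centraliser decomposition. First I would record that, since $P$ is abelian, $kP$ is commutative, so $\IDer(kP)=0$ and $HH^1(kP)=\Der(kP)$; under the action ${}^zD(g)=z\cdot D(z^{-1}\cdot g)$ the fixed space $HH^1(kP)^E$ is exactly the set of $E$-equivariant derivations, those $d\in\Der(kP)$ with $d(z\cdot u)=z\cdot d(u)$ for all $z\in E$ and $u\in P$. Writing $kG$ with $k$-basis $\{uz : u\in P,\ z\in E\}$ and using the twisted multiplication $zu=(z\cdot u)z$, I would define $\tilde d(uz)=d(u)\,z$ and extend $k$-linearly. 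Checking the Leibniz rule for $\tilde d$ on a product $(u_1z_1)(u_2z_2)=u_1(z_1\cdot u_2)z_1z_2$ reduces, after one application of the equivariance $d(z_1\cdot u_2)=z_1\cdot d(u_2)$ and of the derivation property of $d$ on the commutative algebra $kP$, to the identity $d\big(u_1(z_1\cdot u_2)\big)=d(u_1)(z_1\cdot u_2)+u_1\big(z_1\cdot d(u_2)\big)$; a similar verification on the basis gives $\widetilde{[d_1,d_2]}=[\tilde d_1,\tilde d_2]$, so $d\mapsto\tilde d$ is a homomorphism of Lie algebras $HH^1(kP)^E\to\Der(kG)$.

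Next I would prove that the composite with the projection onto $HH^1(kG)$ is injective. Since $\tilde d|_{kP}=d$, it suffices to show that if $\tilde d=\mathrm{ad}_a$ is inner, with $a=\sum_{z\in E}a_zz$ and $a_z\in kP$, then $d=0$. For $u\in P$ one has
$$au-ua=\sum_{z\in E}a_z\big((z\cdot u)-u\big)\,z,$$
whose component in $kP\cdot 1$ is zero, the $z=1$ term being $a_1(u-u)=0$; as $d(u)=\tilde d(u)\in kP$ lies wholly in that component, $d(u)=0$ for every $u\in P$, hence $d=0$. This establishes the inclusion $HH^1(kP)^E\hookrightarrow HH^1(kG)$, and I note that this half uses neither that $E$ is a $p'$-group nor that the action is Frobenius.

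Finally, for the \emph{Frobenius} case I would match dimensions. By the centraliser decomposition (Proposition \ref{CentraliserDecompProp}),
$$\dim_k HH^1(kG)=\sum_{[g]}\dim_k\Hom(C_G(g),k),$$
the sum over conjugacy classes of $G$. In the Frobenius group $G=P\rtimes E$ the class of $1$ contributes $0$ because $[P,E]=P$ forces $G^{\mathrm{ab}}\cong E^{\mathrm{ab}}$, a $p'$-group; every class meeting $G\setminus P$ consists of $p'$-elements whose centraliser lies in a complement and is therefore a $p'$-group, contributing $0$; and for $g\in P\setminus\{1\}$ the Frobenius condition gives $C_E(g)=1$, so $C_G(g)=P$ and the class contributes $\dim_k\Hom(P,k)=\rk(P)$, there being $(|P|-1)/|E|$ such classes. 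Hence $\dim_k HH^1(kG)=\rk(P)\,(|P|-1)/|E|$. Applying the centraliser decomposition to the $p$-group $P$ gives $HH^1(kP)\cong\bigoplus_{g\in P}\Hom(P,k)$ as an $E$-module, with $E$ permuting the summands through its action on $P$; taking $E$-fixed points one orbit at a time yields $\Hom(P,k)^E=0$ from the orbit $\{1\}$ (the Frobenius action gives $P^E=\{1\}$, whence $\Hom(P,k)^E=0$) and one full copy of $\Hom(P,k)$ from each of the $(|P|-1)/|E|$ free orbits in $P\setminus\{1\}$, so that $\dim_k HH^1(kP)^E=\rk(P)\,(|P|-1)/|E|$ as well. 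Equality of dimensions together with the injectivity above forces the inclusion to be an isomorphism.

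The hard part is this last step: the genuine content is the surjectivity, and it rests entirely on the fixed-point-free action, which is what makes the identity class and all classes outside $P$ invisible to $HH^1(kG)$ and pins down $C_G(g)=P$ for $g\in P\setminus\{1\}$. By contrast the construction of $\tilde d$ and the injectivity are formal once the relation $zu=(z\cdot u)z$ is used systematically.
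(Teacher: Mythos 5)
Your proposal is correct, but it takes a genuinely different route from the paper. The paper works cohomologically: it identifies $HH^*(kG)$ with $H^*(G;kG)$, uses the isomorphism $H^*(G;kG)\cong H^*(P;kG)^{E}$ for the normal subgroup $P$ of $p'$-index, splits the coefficients as $kG=kP\oplus k(G\setminus P)$ under conjugation to obtain $HH^*(kG)\cong HH^*(kP)^E\oplus \Ext^*_{kP}(k,k(G\setminus P))^E$, and then observes that a Frobenius action makes $k(G\setminus P)$ projective as a $kP$-module, killing the second summand. You instead build the inclusion by hand — extending an $E$-equivariant derivation $d$ of $kP$ to $\tilde d(uz)=d(u)z$, verifying Leibniz and the bracket, and proving injectivity modulo inner derivations by isolating the $kP\cdot 1$-component of $au-ua$ — and then obtain surjectivity in the Frobenius case by matching the dimension $\rk(P)(|P|-1)/|E|$ computed on both sides via the centraliser decomposition. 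Each approach buys something: the paper's argument works in all cohomological degrees and identifies precisely the complementary summand that survives when the action is not Frobenius, which is what makes the non-Frobenius applications (Theorems \ref{ThrmM12}(i) and \ref{ThrmM22}(ii)) possible; your argument is elementary, makes the Lie-algebra compatibility of the inclusion completely explicit (the paper leaves this implicit in the transfer--restriction identification), and exhibits concrete representing derivations on $kG$, but its surjectivity step is a degree-one dimension count that gives no structural information about the cokernel. One small point worth making explicit in your write-up: the coprimality of $|E|$ and $p$, which you use when arguing $\Hom(P,k)^E=\Hom(P/[P,E],k)=0$ from $P^E=\{1\}$, is automatic here, since an automorphism of $p$-power order of a nontrivial $p$-group always has nontrivial fixed points, so a Frobenius action forces $E$ to be a $p'$-group.
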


\begin{proof}
We show something slightly more general. Let $G$ be a finite group with $N$ a normal subgroup of $p'$-index, and recall that $HH^*(kG)\cong H^*(G;kG)$ where $G$ acts on $kG$ by conjugation. By \cite[III.10.4]{Brown}, $H^*(G;kG)\cong H^*(N;kG)^{G/N}$, which by definition equals $\Ext_{kN}^*(k,kG)^{G/N}$.

Since $N$ is of index coprime to $p$ we have a decomposition under the conjugation action, $kG=kN\oplus k(G\backslash N)$, such that each summand is stable under the action of $N$. One sees that \begin{align}
HH^*(kG) & \cong\Ext_{kN}^*(k,kN)^{G/N}\oplus\Ext_{kN}^*(k,k(G\backslash N))^{G/N} \nonumber\\
& =HH^*(kN)^{G/N}\oplus\Ext_{kN}^*(k,k(G\backslash N))^{G/N}. \nonumber\end{align}
Now if $N=P$ is an elementary abelian $p$-group and $G=P\rtimes E$, we have $HH^1(kP)^{G/P} \cong HH^1(kP)^E\subseteq HH^1(kG)$.

We also have that if the action of $E$ on $P$ is Frobenius then $k(G\backslash P)=\bigoplus_{y\in E\backslash\{1\}}kPy$ is a direct sum of projective $kP$-modules under the conjugation action, so that $\Ext_{kP}^1(k,k(G\backslash P))=0$, and the second statement follows.
\end{proof}

Let $d\in HH^1(kP)$ be a derivation and recall that $x,y$ generate $P,E$ respectively, with $y\cdot x=x^s$ for some $s=2,\ldots,p-1$. Then the action of $E$ on $HH^1(kP)$ in Theorem \ref{fixptisom} is given by $\ ^yd\in HH^1(kP)$, the derivation sending $x\mapsto y\cdot d(y^{-1}\cdot x)$.

\begin{Proposition}\label{elabdimprop}
Let $P$ be an abelian $p$-group, $r$ the rank of $P/\Phi(P)$, and $E$ be an abelian $p'$-subgroup of $\Aut(P)$ acting freely on $P\backslash\{1\}$. Set $H=P\rtimes E$. Then $$\dim_k(HH^1(kH))=r\cdot\left(\frac{|P|-1}{|E|}\right).$$
\end{Proposition}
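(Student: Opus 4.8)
The plan is to reduce to a fixed-point computation and then count dimensions module-theoretically. Since $E$ acts freely on $P\setminus\{1\}$, the group $H=P\rtimes E$ is Frobenius, and the argument proving Theorem~\ref{fixptisom} applies to any abelian normal $p$-subgroup of $p'$-index: it uses only that $P\normal H$ has $p'$-index and, for the isomorphism, that $k(H\setminus P)=\bigoplus_{y\in E\setminus\{1\}}kPy$ is $kP$-free under conjugation, which holds because $C_P(y)=\{1\}$ for $y\neq1$. Hence $HH^1(kH)\cong HH^1(kP)^E$. As $P$ is abelian, $kP$ is commutative, every inner derivation vanishes, and $HH^1(kP)=\Der(kP)$; so it suffices to compute $\dim_k\Der(kP)^E$.

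First I would identify $\Der(kP)$ as a module. Writing $kP\cong k[t_1,\dots,t_r]/(t_1^{p^{n_1}},\dots,t_r^{p^{n_r}})$ with $t_j=x_j-1$ for a minimal generating set $x_1,\dots,x_r$ of $P$, the Kähler differentials $\Omega_{kP/k}$ are free of rank $r$ on $dt_1,\dots,dt_r$, because in characteristic $p$ the relations $d(t_j^{p^{n_j}})=p^{n_j}t_j^{p^{n_j}-1}\,dt_j$ vanish. Thus $\Der(kP)=\Hom_{kP}(\Omega_{kP/k},kP)$ is $kP$-free of rank $r$, so $\dim_k\Der(kP)=r\,|P|$, and it is a module over the skew group algebra $kH=kP\rtimes E$. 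Since $E$ is a $p'$-group, $kE$ is semisimple, so the surjection $\Der(kP)\to U:=\Der(kP)/I\Der(kP)$ (where $I$ is the augmentation ideal of $kP$) splits over $kE$; lifting an $E$-stable copy of $U$ and invoking Nakayama, the induced $kP$-linear map $kP\otimes_k U\to\Der(kP)$ is $E$-equivariant, and a dimension count ($r\,|P|$ on each side) shows it is an isomorphism of $kH$-modules. Here $\dim_k U=r$ and, via the evaluation pairing, $U\cong(\Omega_{kP/k}\otimes_{kP}k)^{*}\cong(I/I^2)^{*}$ as $E$-modules.

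Then I would compute the invariants. As a $kE$-module, $kP$ is the permutation module on the set $P$; since $E$ fixes $1$ and acts freely on $P\setminus\{1\}$, we get $kP\cong k\oplus(kE)^{\oplus(|P|-1)/|E|}$. Using $\Der(kP)^E=(kP\otimes_k U)^E\cong\Hom_{kE}(U^{*},kP)$ together with this decomposition,
\[
\Der(kP)^E\cong\Hom_{kE}(U^{*},k)\ \oplus\ \Hom_{kE}(U^{*},kE)^{\oplus(|P|-1)/|E|}.
\]
The first summand has dimension $\dim_k U^{E}$, which equals $\dim_k(I/I^2)^E$ and vanishes: by coprime action $C_{P/\Phi(P)}(y)=\overline{C_P(y)}=\{1\}$ for each $y\neq1$, so $(I/I^2)^E=C_{P/\Phi(P)}(E)\otimes_{\Fp}k=0$ (this is where nontriviality of $E$ is used). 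The second summand contributes $\dim_k U^{*}=r$ for each of the $(|P|-1)/|E|$ regular summands, giving $\dim_k\Der(kP)^E=r\cdot(|P|-1)/|E|$.

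The main obstacle is the middle step — producing the $kH$-module isomorphism $\Der(kP)\cong kP\otimes_k U$ and the identification $U\cong(I/I^2)^{*}$ — since this is precisely what turns the computation into the transparent permutation-module count. The other ingredients (freeness of $\Omega_{kP/k}$, semisimplicity of $kE$, and the coprime-action vanishing $(I/I^2)^E=0$) are standard, though one should check that the reduction of Theorem~\ref{fixptisom} remains valid for $P$ that need not be elementary abelian.
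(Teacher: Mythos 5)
Your argument is correct, but it takes a genuinely different route from the paper. The paper's proof is a two-line conjugacy-class count: since $E$ is abelian and acts freely, $H$ has exactly $\frac{|P|-1}{|E|}+|E|$ conjugacy classes (citing Siegel--Witherspoon, generalised to the homocyclic case); by the centraliser decomposition $HH^1(kH)\cong\bigoplus_{x}\Hom(C_H(x),k)$, the identity contributes $\Hom(H,k)=0$ (as $[P,E]=P$ forces $H^{\mathrm{ab}}$ to be a $p'$-group), the classes outside $P$ have $p'$-centralisers and contribute nothing, and each of the $\frac{|P|-1}{|E|}$ classes of nontrivial elements of $P$ has centraliser $P$ and contributes $\dim_k\Hom(P,k)=r$. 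Your proof instead passes through $HH^1(kH)\cong\Der(kP)^E$ (you are right that the proof of Theorem~\ref{fixptisom} uses nothing about $P$ beyond being a normal subgroup of $p'$-index, plus $C_P(y)=1$ for the projectivity of $k(H\setminus P)$), identifies $\Der(kP)$ as the free $kP$-module $kP\otimes_k(I/I^2)^{*}$ of rank $r$ over the skew group algebra, and then exploits the permutation-module decomposition $kP\cong k\oplus(kE)^{\oplus(|P|-1)/|E|}$; each step is sound, including the Nakayama/semisimplicity argument for the $kH$-isomorphism and the coprime-action vanishing $(I/I^2)^E=0$. What your route buys is structural information the paper's count does not give — the full $kH$-module structure of $\Der(kP)$ and the identification $HH^1(kH)\cong\Hom_{kE}\bigl((I/I^2),kE\bigr)^{\oplus(|P|-1)/|E|}$, which is the kind of data one wants for the Lie-algebra analysis in Theorem~\ref{preformula} — at the cost of considerably more machinery. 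You also correctly isolate the one place nontriviality of $E$ enters; note that the paper's statement and proof implicitly assume $E\neq 1$ as well (for $E=1$ both sides would have to be $r|P|$, not $r(|P|-1)$), consistent with the ``nontrivial inertial quotient'' hypothesis of Corollary~\ref{dimprop}.
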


\begin{proof}
Since $E$ acts freely and is abelian, we have the $\frac{|P|-1}{|E|}+|E|$ conjugacy classes of $H$ as described in \cite[\S 2]{SieWith}, generalised to the homocyclic case. Noting that $\dim_k(\Hom(P,k)) =r$, the result follows by the centraliser decomposition.
\end{proof}

\noindent From this we obtain the following simple corollary via the derived equivalence of Theorem \ref{stabeqThrm}.

\begin{Corollary}\label{dimprop}
Let $B$ be a block of $kG$ with a nontrivial cyclic defect group $P$ and nontrivial inertial quotient $E$. Then $\dim_k(HH^1(B))=\frac{|P|-1}{|E|}$.
\end{Corollary}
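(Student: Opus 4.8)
The plan is to reduce the statement to Proposition~\ref{elabdimprop} by transporting it across the derived equivalence furnished by Theorem~\ref{stabeqThrm}(iii). First I would apply Theorem~\ref{stabeqThrm}(iii) to obtain a derived equivalence between $B$ and $kH$, where $H=P\rtimes E$. Since derived equivalences preserve Hochschild cohomology (as recorded in the remark immediately following Theorem~\ref{stabeqThrm}), this yields an isomorphism of restricted Lie algebras $HH^1(B)\cong HH^1(kH)$, and in particular the equality of dimensions $\dim_k(HH^1(B))=\dim_k(HH^1(kH))$.

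Next I would verify that the hypotheses of Proposition~\ref{elabdimprop} hold for $H=P\rtimes E$. As $P$ is cyclic we have $P/\Phi(P)\cong C_p$, so its rank is $r=1$. By Theorem~\ref{stabeqThrm}(i) the inertial quotient $E$ is cyclic, hence abelian, of order dividing $p-1$ and thus a $p'$-subgroup of $\Aut(P)$, and $E$ acts freely on $P$; since automorphisms fix the identity, this freeness is exactly freeness on $P\setminus\{1\}$, as Proposition~\ref{elabdimprop} requires. With every hypothesis in place, applying Proposition~\ref{elabdimprop} with $r=1$ gives $\dim_k(HH^1(kH))=1\cdot\left(\frac{|P|-1}{|E|}\right)=\frac{|P|-1}{|E|}$.

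Combining the two dimension statements then gives $\dim_k(HH^1(B))=\frac{|P|-1}{|E|}$, completing the proof. The only step needing any care—and essentially the sole substantive point—is the matching of hypotheses to the cyclic-defect setting: one must confirm that the freeness and abelianness of the $E$-action supplied by Theorem~\ref{stabeqThrm}(i) are precisely what Proposition~\ref{elabdimprop} demands, and that the rank of $P/\Phi(P)$ collapses to $1$ exactly because $P$ is cyclic. Once these identifications are made, the corollary is immediate, so I do not anticipate any genuine obstacle beyond this bookkeeping.
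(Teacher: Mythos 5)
Your proposal is correct and follows exactly the paper's own route: the paper derives this corollary from Proposition \ref{elabdimprop} (with $r=1$ since $P$ is cyclic) via the derived equivalence of Theorem \ref{stabeqThrm}(iii), which is precisely your argument. Your additional care in checking that the freeness and abelianness of the $E$-action match the hypotheses of Proposition \ref{elabdimprop} is sound bookkeeping that the paper leaves implicit.
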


The following theorem provides us with a simple recipe to determine a Lie algebra basis of the $HH^1$ of blocks with cyclic defect group. 

\begin{Theorem}\label{preformula} Let $P$ be a cyclic group of order $q=p^t$ with a generator $x$, and $E$ a $p$'-subgroup of $\Aut(P)$ of order $e$, with a generator sending $x$ to $x^s$. For $m=0,\ldots,q-1$, let $D_m$ denote the derivation on $kP$ that sends $$x\longmapsto\sum_{i=0}^{e-1}s^ix^{(m-1)s^{-i}+1}.$$  Define an equivalence relation on $\{D_0,\ldots,D_{q-1}\}$ by $D_m\sim D_n$ if and only if $n=(m-1)s^{-i}+1$ for some $i=0,\ldots,e-1$. Then we have the following.
\begin{itemize}
    \item[(i)] If $E$ is non-trivial, then for all $m$, $D_m(x)\in J(kP)$.
    
    \item[(ii)] The set of equivalence class representatives of $\sim$ on $\{D_0,\ldots,D_{q-1}\}$ forms a $k$-basis for $HH^1(kP)^{E}$ as a Lie algebra.
    
    \item[(iii)] Let $p>2$, $t=1$ so that $P$ is cyclic of order $p$, and let $E$ have order $p-1$. Then $HH^1(kP)^{E}$ has a $p$-toral basis.
\end{itemize}

\end{Theorem}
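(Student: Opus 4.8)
The plan is to work throughout with the identification $HH^1(kP) = \Der(kP)$, which is valid because $kP$ is commutative and hence has no non-zero inner derivations. Writing $kP = k[x]/(x^q-1)$ and using that $q = p^t$ is zero in $k$, every choice of $D(x)$ extends via the Leibniz rule to a (well-defined) derivation, so $\{\delta_0,\dots,\delta_{q-1}\}$ with $\delta_m(x) = x^m$ is a $k$-basis and $\dim_k HH^1(kP) = q$. First I would record the induced $E$-action on this basis: writing $\bar{s}$ for the image of $s$ in $k$, a short computation with $\,^yD(x) = y\cdot D(y^{-1}\cdot x)$ gives $\,^y\delta_n = \bar{s}^{-1}\delta_{s(n-1)+1}$, the exponent read modulo $q$. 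Iterating this and comparing with the definition of $D_m$, I would then verify the clean identity $D_m = \Tr_1^E(\delta_m)$, where $\Tr_1^E$ is the relative trace on the $kE$-module $HH^1(kP)$; this single identity drives the rest of the argument.

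For (i), since $E$ is non-trivial and $\gcd(e,p)=1$ the reduction $(\Z/q)^\times \to \F_p^\times$ is injective on $\langle s\rangle$, so $\bar{s}$ has order $e>1$; in particular $\bar{s}\neq 1$ while $\bar{s}^e = 1$, whence $\sum_{i=0}^{e-1}\bar{s}^i = 0$. Applying the augmentation to $D_m(x) = \sum_i \bar{s}^i x^{(m-1)s^{-i}+1}$ therefore gives $0$, i.e.\ $D_m(x)\in J(kP)$. For (ii), each $D_m$ is $E$-fixed (immediate from $D_m = \Tr_1^E(\delta_m)$), and $\Tr_1^E$ is surjective onto $HH^1(kP)^E$ since $e$ is invertible in $k$; hence $\{D_m\}$ spans $HH^1(kP)^E$. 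The equivalence relation is exactly ``the shifted indices $m-1,n-1$ lie in the same $\langle s\rangle$-orbit in $\Z/q$'', and a one-line reindexing shows $D_n$ is a non-zero scalar multiple of $D_m$ when $D_m\sim D_n$, so it suffices to keep one representative per orbit. The key combinatorial input is that $s^j-1$ is a unit modulo $q$ for $1\le j\le e-1$ (because $\bar{s}^j\neq 1$ forces $p\nmid s^j-1$), so $\langle s\rangle$ acts freely on $\Z/q\setminus\{0\}$; this yields $(q-1)/e$ non-trivial orbits, while the orbit $\{0\}$ corresponds to $m=1$ and produces the degenerate $D_1 = (\sum_i\bar{s}^i)\delta_1 = 0$. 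Representatives from distinct orbits involve pairwise disjoint sets of basis derivations $\delta_n$, so the $(q-1)/e$ non-zero representatives are linearly independent; since $\dim_k HH^1(kP)^E = (q-1)/e$ by the free action together with Theorem \ref{fixptisom} and Corollary \ref{dimprop} (equivalently Proposition \ref{elabdimprop} with $r=1$), they form a basis.

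For (iii), with $q = p$ and $e = p-1$ the space $HH^1(kP)^E$ is one-dimensional, spanned by any $D_m$ with $m\neq 1$, and the claim is that this generator is toral for the restricted structure $D\mapsto D^{[p]} = D^{\circ p}$ ($p$-fold composition). My plan is to read off the eigenvalues of $D := D_m$, as a linear operator on $kP$, from the radical filtration. By part (i), $D(z)\in J = (z)$ where $z = x-1$, so $D$ preserves every $J^j = (z)^j$; writing $D(z) = \alpha z + (\text{higher order})$, the Leibniz rule gives $D(z^j)\equiv j\alpha z^j \pmod{J^{j+1}}$, so the eigenvalues of $D$ are exactly $\{j\alpha : 0\le j\le p-1\}$. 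Expanding $D_m(x)$ in powers of $z$ computes the leading coefficient as $\alpha = \sum_i \bar{s}^i\big((m-1)\bar{s}^{-i}+1\big) = (m-1)e = -(m-1)$ in $\F_p$, which is non-zero as $m\neq 1$. Thus $D$ has $p$ distinct eigenvalues, namely all of $\F_p$ (since $\alpha\in\F_p^\times$), so $D$ is diagonalizable with eigenvalues in $\F_p$ and hence $D^{\circ p} = D$; that is, $D$ is toral and $\{D\}$ is a $p$-toral basis.

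The main obstacle is part (iii): the assertion that the generator is \emph{literally} toral (not merely toral after rescaling) hinges on computing that the leading radical-filtration coefficient is $\alpha = -(m-1)\in\F_p^\times$. Obtaining this requires care with the two distinct reductions in play --- exponents are read modulo $q=p$ while the structure constants $\bar{s}^i$ live in $k$ --- and with checking that passing to the associated graded of the radical filtration faithfully records the eigenvalues of $D$. The remaining bookkeeping (the $E$-action formula, the trace identity, and the orbit count in (ii), including the degenerate class $m=1$) is routine once the identity $D_m = \Tr_1^E(\delta_m)$ is established.
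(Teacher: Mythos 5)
Your argument is correct. For (i) and (ii) you follow essentially the same route as the paper: both identify $D_m$ as the relative trace $\Tr_1^E(d_m)$ of the standard basis $d_m\colon x\mapsto x^m$ of $\Der(kP)$, deduce (i) from $\sum_{i=0}^{e-1}\bar s^{\,i}=0$, and obtain (ii) from the fact that $E$-orbit sums span the fixed points when $|E|$ is invertible in $k$. Your treatment of (ii) is in fact slightly sharper: you isolate the degenerate class $[D_1]=\{0\}$ (which the paper's asserted bijection between $E$-orbits on $P$ and $\sim$-classes silently includes), and you supply the linear-independence step explicitly via the pairwise disjoint supports of representatives from distinct orbits, so the count $(q-1)/e$ needs no external input. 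Where you genuinely diverge is (iii). The paper exhibits the explicit element $w=\sum_{i=1}^{p-1}i^{-1}x^i$, shows $\{1,w,\dots,w^{p-1}\}$ is a $k$-basis of $kP$ with $w^p=0$ and $D_0(w)=w$, and concludes $D_0^p=D_0$ because a derivation is determined by its value on the algebra generator $w$. You instead observe that $D_m$ preserves the radical filtration of $kP=k[z]/(z^p)$, acts on $J^j/J^{j+1}$ by $j\alpha$ with $\alpha=e(m-1)=-(m-1)\in\F_p^{\times}$ for $m\neq 1$, and hence is diagonalizable with spectrum all of $\F_p$, giving $D_m^{\circ p}=D_m$. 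Both arguments are complete; the paper's computation buys an explicit eigenbasis of $kP$ (and its element $w$ is reused in the remark following the theorem for the case $t>1$), while your filtration argument is more conceptual, applies uniformly to every representative $D_m$ with $m\neq 1$, and makes transparent exactly why the eigenvalues land in $\F_p$ rather than merely in $k$ --- the point on which torality, as opposed to torality up to rescaling, hinges.
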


\begin{proof}
The Automorphism group $\Aut(P)$ is isomorphic to the cyclic group $C_{\varphi(q)}$ where $\varphi(q)=q-p^{t-1}$. Whence $E$ is cyclic acting freely on $P$. Let $E$ have a generator $y$, so that $y\cdot x=x^s$. We have the Lie algebra isomorphism $HH^1(kH)\cong HH^1(kP)^E$ of Theorem \ref{fixptisom}, and we look for a basis of $HH^1(kP)^E$ of derivations $D\in HH^1(kP)$ such that $^{y^j}D(x^i)=D(x^i)$ for all $j=0,\ldots,e-1$ and $i=0,\ldots,q-1$. 

Let $\CB=\{d_0,\ldots,d_{q-1}\}$ be the basis of $HH^1(kP)$ given by the maps $d_m$ sending $x\mapsto x^m$.  Taking inspiration from \cite[Proposition 2.5.10]{LinBlockI} we look for a basis of $HH^1(kP)^E$ in the set of $E$-orbit sums of $\CB$. We denote by $\Tr_1^E:HH^1(kP)\to HH^1(kP)^E$ the relative trace map, and define $D_m=\Tr_1^E(d_m)$. This maps $$x\longmapsto\sum_{z\in E}z\cdot d_m(z^{-1}\cdot x)=\sum_{i=0}^{e-1}y^{-i}\cdot d_m(x^{s^i}).$$ One sees that each term in the sum may be expressed as $y^{-i}\cdot(s^ix^{m+s^i-1})=s^ix^{(m-1)s^{-i}+1}$, as required. We note that if $E$ is trivial then this formula recovers the basis $\CB$ of $HH^1(kP)$, for $D_m=d_m$ in this case.

Recall that $s$ is such that $s^e\equiv 1 \ (\mod \ p)$. One sees that $(1-s)(\sum_{i=0}^{e-1}s^i)=1-s^e=0$, and if $E$ is non-trivial, then the free action of $E$ on $P$ implies $s>1$, whence $\sum_{i=0}^{e-1}s^i=0$, from which it is clear that $D_1\equiv 0$ and $D_m(x)=\sum_{i=0}^{e-1}s^ix^{(m-1)s^{-i}+1}\in I(kP)=J(kP)$, proving (i). 

One verifies that $\sim$ is indeed an equivalence relation on $\{D_0,\ldots,D_{q-1}\}$, and that there is a bijection between the set of $E$-orbits on $P$ and the equivalence classes under $\sim$, via the map \begin{align}
  \{E\cdot x^j \ | \ j=0,\ldots,q-1\} &\leftrightarrow \{[D_m] \ | \ m=0,\ldots, q-1 \},   \nonumber \\
   \ E\cdot x^{m-1} &\mapsto [D_m], \nonumber
\end{align} proving (ii).

Now suppose that $p\neq 2$, $t=1$ so that $P$ is cyclic of order $p$, and that $E$ has order $p-1$. Fix an element $w=\sum_{i=1}^{p-1}i^{-1}x^i\in kP.$ Then $w\in I(kP)=J(kP)$, as $\sum_{i=1}^{p-1}i^{-1}=\sum_{i=1}^{p-1}i=0.$ On the other hand $w\notin J(kP)^2$. Indeed $J(kP)^2$ is generated as $k$-vector space by all elements of the form $x^i(x-1)^2$ as $i$ ranges from $0$ to $p-1$, and if we set $w=\sum_{i=0}^{p-1}\lmd_ix^i(x-1)^2$ then on comparing coefficients we get a system of linear equations in the variables $\lmd_i$ whose coefficient matrix has rank $p-2$ but whose augmented matrix has rank $p-1$, whence no such $\lmd_i\in k$ exist. By Nakayama's Lemma $w$ generates $J(kP)$ as a $kP$-module, and noting that $w^p=(\sum_{i=1}^{p-1}i^{-1}x^i)^p=\sum_{i=1}^{p-1}i^{-p}x^{pi}=\sum_{i=1}^{p-1}i^{-1}=0$ one sees that $\{1,w,w^2,\ldots,w^{p-1}\}$ is a basis for $kP$ as a $k$-vector space.

One checks that $w$ is sent to $sw$ under the action of $y$ where $s$ is such that $x$ is sent to $x^s$ by $y$ as before. By Proposition \ref{dimprop} $HH^1(kP)^E$ has dimension $1$ so we may choose $D_0$ as representative of the class forming a basis, without loss of generality. Then we have that $D_0(w)=w$. To see this, first note that $d_0(w)=\sum_{i=1}^{p-1}x^{i-1}$, and consider

$$D_0(w) = \sum_{z\in E} \ ^zd_0(w) = \sum_{i=0}^{p-2} y^i\cdot d_0(y^{-i}\cdot w)  = \sum_{i=0}^{p-2}y^i\cdot \left(s^{-i}d_0(w)\right) = \sum_{i=0}^{p-2}\sum_{j=1}^{p-1}s^{-i}x^{(j-1)s^i},$$ which on expanding the direct sum indexed by $j$ is seen to be equal to $\sum_{i=1}^{p-2}i^{-1}w$. One verifies that $\sum_{i=1}^{p-2}i^{-1}=1$ and the result then follows: $D^p_0$ sends $w$ to $w$.

\end{proof}

\begin{Remark}
In the previous result it is possible to show that for $P$ cyclic of order $q=p^t$, $t>1$, there is still a basis of $HH^1(kP)^E$ consisting of maps $D_m$ such that $D_m^p=D_m$. Indeed, fixing $w=\sum_{i=1,p\nmid i}^{q-1}i^{-1}x^i$ one verifies by an analogous proof that $\{1,w,w^2,\ldots,w^{q-1}\}$ is a basis for $kP$ as a $k$-vector space, and that $D_m^p(w)=D_m(w)=w$ for all $m=0,\ldots,q-1$, $m\neq 1$. On the other hand a simple verification shows that in general $[D_m,D_n]\neq0$ for $n\neq m$ and so this basis is not $p$-toral.
\end{Remark}

\section{The Tate-Hochschild cohomology dimension formula}\label{altsum}

Using a result of K\"ulshammer and Robinson \cite[Theorem 1]{KueRoHH} we verify the following statement from Theorem \ref{ThrmM12}(i). As mentioned before we use this alternative method for verification in order to demonstrate its use in application to other groups and more general cases, and its power as a computational tool.

\begin{Proposition}\label{M12prop}
    Let $G=M_{12}$, let $k$ be an algebraically closed field of characteristic 2 and let $B_0$ be the principal block of $kG$. Then $\dim_k(HH^1(B_0))=13$.
\end{Proposition}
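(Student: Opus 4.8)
The plan is to apply the Külshammer--Robinson alternating sum formula \cite[Theorem 1]{KueRoHH} to the principal $2$-block $B_0$ of $kM_{12}$ and extract $\dim_k(HH^1(B_0))$ from it. The formula expresses the Tate--Hochschild cohomology dimensions of a block in terms of an alternating sum running over conjugacy classes of $p$-subgroups (or chains of $p$-subgroups) of $G$, with contributions coming from ordinary cohomology of local subgroups with coefficients in suitable modules. Since Tate--Hochschild cohomology agrees with ordinary Hochschild cohomology in positive degree, the degree-$1$ part of the formula will yield $\dim_k(HH^1(B_0))$. First I would assemble the input data: from Table \ref{tabdefect} the $2$-blocks of $kM_{12}$ are the principal block $B_0$ (defect $6$) and the non-principal block $B_1$ (defect $2$, Klein four defect group), and by Theorem \ref{ThrmM12}(i) we already know $\dim_k(HH^1(B_1))=2$ and $\dim_k(HH^1(kM_{12}))=15$ from Table \ref{tabdimkg}, so the target $13$ is exactly $15-2$.

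The key computational steps are as follows. First I would enumerate, up to $G$-conjugacy, the relevant $p$-local data appearing in the Külshammer--Robinson sum: the $2$-subgroups $Q$ of $G$ (or the chains, depending on the precise form of the formula used), their normalisers $N_G(Q)$, and the Brauer correspondent blocks of $B_0$ in these normalisers. For each such term one must identify which blocks of $kN_G(Q)$ have $B_0$ as their Brauer correspondent, and compute the corresponding local cohomological contribution in degree $1$. Using \texttt{GAP} together with the ATLAS and modular ATLAS data one can list the conjugacy classes of $2$-subgroups of $M_{12}$ and the structure of their normalisers. Then I would evaluate each term of the alternating sum in degree $1$, summing the signed contributions. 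The main obstacle will be organising and correctly computing these numerous local terms: determining the Brauer correspondence of blocks at each $2$-local subgroup and the associated first-degree cohomology groups is delicate, and the alternating sum involves cancellation between many terms, so bookkeeping errors are easy to make and the individual contributions are not independently meaningful.

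To control this I would cross-check the computation against the independently known quantity. Since the centraliser decomposition already gives $\dim_k(HH^1(kM_{12}))=15$ and the cyclic-defect theory (Theorem \ref{ThrmM12}(i), via Holm \cite{Holm}) gives $\dim_k(HH^1(B_1))=2$, the block decomposition $HH^1(kM_{12})\cong HH^1(B_0)\oplus HH^1(B_1)$ forces $\dim_k(HH^1(B_0))=13$; the role of this proposition is to confirm that the Külshammer--Robinson formula reproduces this value independently, thereby validating the method for future use on groups lacking the favourable block structure of the Mathieu groups. Accordingly I would present the formula specialised to $B_0$, record the list of local terms with their degree-$1$ contributions, and verify that the alternating sum collapses to $13$, matching the expected value and confirming consistency with Theorem \ref{ThrmM12}(i).
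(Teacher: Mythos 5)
Your proposal takes essentially the same route as the paper: apply the K\"ulshammer--Robinson alternating sum (Theorem \ref{dimThrm}) to $B_0$, evaluate the degree-one local terms, and cross-check against $\dim_k(HH^1(kM_{12}))-\dim_k(HH^1(B_1))=15-2=13$. The paper's proof consists precisely of executing the enumeration you defer --- restricting to the $20$ chains of radical $2$-subgroups arising from the six An--Conder classes, identifying each $B_\sigma$ via Brauer's third main theorem and Lemma \ref{pgroupLemma}, and computing each $\dim_k(HH^1(B_\sigma))$ by Lemma \ref{c2xs5} and the centraliser decomposition --- so the only difference is that the substantive bookkeeping is carried out there in full.
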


From Table \ref{tabdefect} we see that $M_{12}$ is the only sporadic Mathieu group with a non-principal block that has defect greater than 1, occuring when $p=2$. The results of Section \ref{PrE} no longer apply here and so the method used to find the dimensions of the $HH^1$ of the principal block are different for this case only. Fortunately we have the result of Holm (\cite[2.1]{Holm}) to tell us that $\dim_k(HH^1(B_1))=2$ in this case, from which we can use the centraliser decomposition to calculate $\dim_k(HH^1(kG))=15$ and recover $\dim_k(HH^1(B_0))=13$.

On the other hand, we can verify this dimension more explicitly, using a formula for the dimension of the Tate-Hochschild cohomology given by K\"{u}lshammer and Robinson, developed from Kn\"orr and Robinson's refinement of Alperin's weight conjecture \cite{KnRo}. The results of this section demonstrate the weight of application of this formula, and further applications along the same vein have been used to verify the dimensions of the $HH^1$ of the principal blocks given in this paper (we omit these details). Throughout let $G$ be finite and $k$ be algebraically closed of characteristic $p$ dividing $|G|$.

In \cite{KnRo} Kn\"orr and Robinson consider several simplicial complexes related to the $p$-local structure of $G$, on which $G$ acts naturally by conjugation. Let $\CP_G$ denote the simplicial complex with $n$-simplices the chains of non-trivial $p$-subgroups of $G$ of the form $Q_1<Q_2<\cdots<Q_{n+1}$ (with strict inclusion). In the course of the proof of Proposition \ref{M12prop}, we will only consider $\CU_G\subset\CP_G$ of chains of the form $Q_1<Q_2<\cdots <Q_n$ such that $Q_i=O_p(N_G(Q_i))$ for all $i=1,\ldots,n$. We call such subgroups $Q_i$ \textit{radical p-subgroups}.

We denote by $\sigma$ an element of $\CP_G$ (also known as a $p$-chain), that is $\sigma = Q_0<\cdots<Q_m$ for some integer $m\geq0$, such that $Q_i$ is a $p$-subgroup of $G$ for all $0\leq i\leq m$. The empty chain is viewed as a $-1$-simplex, and so for this and also for notational convenience all chains will be viewed as chains of the form $Q_0<Q_1<\cdots<Q_m$ where it is always the case that $Q_0=\{1_G\}$ (c.f \cite[Section 2]{KnRo}).

Given a $p$-chain $\sigma$, we define the subgroups $C_G(\sigma)=C_G(Q_m)$ and $N_G(\sigma)=\cap_{i=0}^mN_G(Q_i)$. The \textit{length} of $\sigma$ denoted $|\sigma|$, is equal to the number of nontrivial subgroups in the chain: in our case it will always be equal to $m$ as our chains always start with the trivial subgroup. We say that $\sigma$ is a \textit{radical} or \textit{normal} chain if $Q_0=O_p(G)$ and $Q_i=O_p(N_G(\sigma_i))$, where $\sigma_i=Q_0<\cdots<Q_i$ for all $0\leq i\leq m$. Denote by $\CR_G$ the set of all radical $p$-chains of $G$ (note this is not a simplicial complex, see the remarks following \cite[3.4]{KnRo}). 

Since $G$ acts by conjugation on $\CP_G$ and all its subsets we may consider $\CP_G/G$, a set of $G$-conjugacy class representatives of $p$-chains, with the analogous notation defined on the subsets of $\CP_G$ considered above. 

For a $p$-subgroup $Q$ of $G$, denote by $\Br_Q:(kG)^Q\to kC_G(Q)$ the Brauer homomorphism, sending $\sum_{g\in G}\lmd_gg$ to $\sum_{g\in C_G(Q)}\lmd_gg$. With this notation, we have the following result (restricted to positive degrees for Hochschild cohomology, c.f \cite[10.7.11]{LinBlockII}), in which we may replace $\CP_G/G$ with either $\CU_G/G$ or $\CR_G/G$, without altering the result.

\begin{Theorem}\cite[Theorem 1]{KueRoHH}\label{dimThrm}
Let $G$ be a finite group and $b$ a block idempotent of $kG$. For any chain $\sigma=Q_0 < Q_1 < \cdots < Q_m$ in $\CP_G$ set $B_\sigma = kN_G(\sigma)\Br_{Q_m}(b)$. Then for any integer $n>0$ we have $$\sum_{\sigma\in\CP_G/G}(-1)^{|\sigma|}\dim_k(HH^n(B_\sigma))=0.$$
\end{Theorem}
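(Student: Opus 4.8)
The plan is to recognise the alternating sum as the Euler characteristic of an exact sequence coming from Webb's local decomposition of a cohomological Mackey functor over the poset of $p$-subgroups. First I would pass from Hochschild to group cohomology: since a block algebra of a finite group is symmetric, its Tate--Hochschild cohomology agrees with ordinary Hochschild cohomology in positive degrees, and one has the standard identification $HH^n(B_\sigma)\cong \widehat H^n(N_G(\sigma);B_\sigma)$, where $\widehat H$ denotes Tate cohomology and $N_G(\sigma)$ acts on $B_\sigma$ by conjugation. The point of the Brauer homomorphism in the definition $B_\sigma=kN_G(\sigma)\Br_{Q_m}(b)$ is that $B_\sigma$ is obtained from the conjugation $p$-permutation module $kGb$ by a Brauer construction at $Q_m$ followed by passage to $N_G(\sigma)$; this is exactly the data needed to turn $\sigma\mapsto HH^n(B_\sigma)$ into a coefficient system on the chains of $\CP_G$.

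Next I would organise these groups into a cohomological Mackey functor $\CM$ for $G$ on the poset of non-trivial $p$-subgroups, with $\CM(N_G(\sigma))=\widehat H^n(N_G(\sigma);B_\sigma)$ and with restriction and transfer maps induced, via the Brauer homomorphism, by the inclusions of chains into their faces. The key structural input is then Webb's theorem: for such a functor there is a canonical exact (indeed split) sequence
\begin{equation*}
0\longrightarrow \CM(G)\longrightarrow \bigoplus_{\substack{\sigma\in\CP_G/G\\ |\sigma|=1}}\CM(N_G(\sigma))\longrightarrow \bigoplus_{\substack{\sigma\in\CP_G/G\\ |\sigma|=2}}\CM(N_G(\sigma))\longrightarrow\cdots
\end{equation*}
indexed by $G$-orbits of $p$-chains, in which the length-$0$ chain $Q_0=\{1_G\}$ contributes the global term $\CM(G)=\widehat H^n(G;kGb)=HH^n(B)$. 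Since a bounded exact sequence of finite-dimensional $k$-vector spaces has vanishing alternating sum of dimensions, and the term for $|\sigma|=0$ carries sign $(-1)^0=+1$, this immediately gives $\sum_{\sigma\in\CP_G/G}(-1)^{|\sigma|}\dim_k HH^n(B_\sigma)=0$ for every $n>0$. The same mechanism, using that the fixed-point complex $|\CP_G|^Q$ is contractible for each non-trivial $p$-subgroup $Q$ (Quillen, Thévenaz--Webb), lets one delete the non-radical subgroups and replace $\CP_G/G$ by $\CU_G/G$ or $\CR_G/G$ without changing the sum.

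The step I expect to be the main obstacle is the verification that $\sigma\mapsto HH^n(B_\sigma)$ really is a cohomological Mackey functor of the kind to which Webb's split-exact sequence applies: one must check that the Brauer constructions of the $p$-permutation bimodule $kGb$ are compatible with restriction and transfer along chain inclusions, so that the connecting maps above are the honest restriction/Brauer maps and the functor is genuinely cohomological. This is where the $p$-permutation structure of the conjugation module, together with the contractibility of the complexes $|\CP_G|^Q$, does the essential work, and it is precisely what forces the identity to hold degreewise in $n$ rather than only for the total Euler characteristic $\sum_n(-1)^n$. A secondary technical point is the careful matching of idempotents under $\Br_{Q_m}$ that identifies $B_\sigma$ with the Brauer construction of $kGb$ at $Q_m$ restricted and truncated to $N_G(\sigma)$.
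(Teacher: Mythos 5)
The paper offers no proof of this statement: it is quoted verbatim from K\"ulshammer and Robinson, so your attempt can only be measured against the argument in that source rather than against anything in the text. Your outline assembles the correct ingredients --- the identification $HH^n(B_\sigma)\cong \widehat{H}^n(N_G(\sigma);B_\sigma)$ for the conjugation action, the $p$-permutation structure of $kGb$, a Webb-type alternating-sum/Steinberg-complex argument, and the contractibility of $|\CP_G|^Q$ for $Q\neq 1$ to pass between $\CP_G$, $\CU_G$ and $\CR_G$ --- but the step you yourself flag as ``the main obstacle'' is a genuine gap, and it is exactly where the content of the theorem sits. Webb's split exact sequence applies to a \emph{fixed} coefficient module (equivalently, a single Mackey functor) evaluated at the stabilisers $N_G(\sigma)$; here the coefficients $B_\sigma=kN_G(\sigma)\Br_{Q_m}(b)$ change with the chain through the Brauer construction at its top term, so the sum is not the Euler characteristic of Webb's sequence for any one functor, and declaring $\sigma\mapsto HH^n(B_\sigma)$ to be ``a coefficient system of the kind to which Webb applies'' is an assertion, not a verification.

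The gap is closed by linearising rather than by functoriality: $kGb$ is a direct summand of the conjugation permutation module $kG\cong\bigoplus_{x}k[G/C_G(x)]$, and for a permutation module $V=k\Omega$ the Brauer construction is simply fixed points, $V(Q)=k[\Omega^{Q}]$, so that for $n>0$ Shapiro's lemma gives $\widehat{H}^n(N_G(\sigma);V(Q_m))\cong\bigoplus_{\omega}\widehat{H}^n(N_G(\sigma)_\omega;k)$ with $\omega$ running over $N_G(\sigma)$-orbit representatives in $\Omega^{Q_m}$. The condition $\omega\in\Omega^{Q_m}$ says precisely that $\sigma$ is a chain of $p$-subgroups of the stabiliser $G_\omega$, so interchanging the sum over chains with the sum over $\omega$ rewrites the left-hand side as a sum over $G$-orbits of $\Omega$ of expressions $\sum_{\sigma\in\CP_{H}/H}(-1)^{|\sigma|}\dim_k\widehat{H}^n(N_{H}(\sigma);k)$ with $H=G_\omega$, each of which vanishes by Webb's theorem for a fixed (here trivial) coefficient module; this is also where $n>0$ is used. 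Supplying this reduction --- together with the routine check that cutting by the $N_G(\sigma)$-fixed idempotent $\Br_{Q_m}(b)$ is compatible with these direct sum decompositions --- turns your sketch into the proof in the cited source; without it the argument does not yet go through.
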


In this alternating sum, the chain $\sigma=Q_0$ recovers the dimension $\dim_k(HH^1(kGb))$ which we may isolate on one side of the equality, and the other terms require only calculations of the dimensions of the $HH^1$ of smaller group algebras, or blocks of smaller group algebras, for which we can use the centraliser decomposition or Lemma \ref{c2xs5} below.

Since Proposition \ref{M12prop} concerns the principal block of the $M_{12}$, we will make use of the following when evaluating the Brauer homomorphism involved in the structure of the $B_\sigma$ above.

\begin{Theorem}\cite[6.3.14, Brauer's Third Main Theorem]{LinBlockII}\label{BrauerThrm3}
Let $G$ be a finite group and $b$ the principal block idempotent of $kG$. Then, for any $p$-subgroup $Q$ of $G$, $\Br_Q(b)$ is the principal block of both $kC_G(Q)$ and $kN_G(Q)$.
\end{Theorem}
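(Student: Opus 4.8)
The plan is to characterise the principal block idempotent of every group algebra in sight by a single numerical invariant, its augmentation, and then to show that this invariant is preserved by $\Br_Q$. Recall that for any finite group $H$ the augmentation map $\varepsilon_H\colon kH\to k$, $h\mapsto 1$, is an algebra homomorphism, so $\varepsilon_H$ sends any idempotent to $0$ or $1$; since the block idempotents of $kH$ are orthogonal and sum to $1$, exactly one of them has augmentation $1$, and this is precisely the principal block idempotent (it is the block idempotent acting as the identity on the trivial module $k$, on which $kH$ acts via $\varepsilon_H$). Thus, writing $e_C$ and $e_N$ for the principal block idempotents of $kC_G(Q)$ and $kN_G(Q)$, it suffices to prove that $\Br_Q(b)=e_C$ as elements of $kC_G(Q)$, and that this same element is the principal block idempotent $e_N$ of $kN_G(Q)$.

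First I would establish the key compatibility $\varepsilon_{C_G(Q)}\circ\Br_Q=\varepsilon_G$ on $(kG)^Q$. Let $a=\sum_{g\in G}\lambda_g g\in(kG)^Q$, so the coefficient function is constant on $Q$-conjugacy classes. For $g\notin C_G(Q)$ the $Q$-orbit of $g$ under conjugation has size $[Q:C_Q(g)]$, a nontrivial power of $p$, hence $\equiv 0$ in $k$; as $\lambda$ is constant on this orbit, its total contribution to $\varepsilon_G(a)=\sum_g\lambda_g$ vanishes. Only the fixed points $g\in C_G(Q)$ survive, giving $\varepsilon_G(a)=\sum_{g\in C_G(Q)}\lambda_g=\varepsilon_{C_G(Q)}(\Br_Q(a))$. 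Applying this to $b\in Z(kG)\subseteq(kG)^Q$ yields $\varepsilon_{C_G(Q)}(\Br_Q(b))=\varepsilon_G(b)=1$. Next I would check that $\Br_Q(b)$ is a nonzero central idempotent of $kC_G(Q)$ containing $e_C$: it is idempotent because $\Br_Q$ is an algebra homomorphism on $(kG)^Q$ and $b^2=b$, and nonzero by the augmentation computation; for centrality, every $h\in C_G(Q)$ lies in $(kG)^Q$ with $\Br_Q(h)=h$, so $bh=hb$ gives $\Br_Q(b)h=h\Br_Q(b)$. Hence $\Br_Q(b)\in Z(kC_G(Q))$ is a sum of block idempotents, and since it has augmentation $1$ while every non-principal block idempotent has augmentation $0$, the idempotent $e_C$ must occur as a summand, i.e. $e_C\Br_Q(b)=e_C$.

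The remaining, and genuinely hard, step is \emph{exactness}: ruling out non-principal summands to conclude $\Br_Q(b)=e_C$, since the augmentation alone cannot detect the non-principal blocks. I expect this to be the main obstacle. My plan is to reduce to the case $|Q|=p$ using transitivity of the Brauer homomorphism along a chain $1=Q_0\trianglelefteq Q_1\trianglelefteq\cdots\trianglelefteq Q_r=Q$ with each $[Q_i:Q_{i-1}]=p$, so that $\Br_Q$ factors through the successive maps on the centralisers; then I would identify the blocks appearing in $\Br_Q(b)$ with the Brauer correspondents of $b$ via the First Main Theorem, a block $c$ of $C_G(Q)$ being a summand of $\Br_Q(b)$ exactly when $c$ induces to $b$, and argue through the defect-group structure that the only correspondent of the principal block $b$ (whose defect group is a Sylow $p$-subgroup) is the principal block $e_C$ of $C_G(Q)$. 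An alternative route uses the Brauer construction on modules: the trivial module $k$ lies in $b$, and $k(Q)\cong k$ is the trivial, hence principal-block, $kN_G(Q)$-module, forcing the principal block to appear while a vertex/defect argument suppresses the rest.

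Finally, for the $N_G(Q)$ statement I would use that $C_G(Q)\trianglelefteq N_G(Q)$. The principal idempotent $e_C$ is canonical, hence $N_G(Q)$-stable, so it lies in $Z(kC_G(Q))^{N_G(Q)}\subseteq Z(kN_G(Q))$ (an $N_G(Q)$-fixed central element of the normal subgroup algebra commutes with every $n\in N_G(Q)$, since $ne_Cn^{-1}=e_C$, and with all of $kC_G(Q)$). Viewed in $kN_G(Q)$ it again has augmentation $1$, so by the block theory of the normal subgroup $C_G(Q)$ it is covered by a unique block of $N_G(Q)$ — the principal one — which identifies $\Br_Q(b)=e_C$ with $e_N$ and completes both assertions.
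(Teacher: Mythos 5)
You should first note that the paper contains no proof of this statement: it is Brauer's Third Main Theorem, quoted directly from \cite[Theorem 6.3.14]{LinBlockII}, so your proposal can only be measured against the standard proofs in the literature. Your ``soft'' half is correct and matches the standard opening of those proofs: the principal block idempotent is indeed the unique block idempotent of nonzero augmentation (orthogonality of the block idempotents gives uniqueness, as you say), your computation $\varepsilon_{C_G(Q)}(\Br_Q(a))=\varepsilon_G(a)$ for $a\in (kG)^Q$ via the $p$-power $Q$-orbits is right, and it follows correctly that $\Br_Q(b)$ is a central idempotent of $kC_G(Q)$ (and, being $N_G(Q)$-stable, of $kN_G(Q)$) of augmentation $1$, so the principal block occurs in it as a summand. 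You also correctly identify the exactness step as the genuine content of the theorem.

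It is exactly there, however, that the proposal has real gaps. Your plan for exactness --- identify the summands of $\Br_Q(b)$ with Brauer correspondents of $b$ and ``argue through the defect-group structure that the only correspondent of the principal block is the principal block'' --- is circular: that uniqueness assertion \emph{is} Brauer's Third Main Theorem, the First Main Theorem (which concerns blocks with defect group exactly $Q$) gives no purchase on it, and defect groups cannot separate the principal from the non-principal summands, since non-principal blocks may have Sylow $p$-subgroups as defect groups (for instance every block of $kG$ does whenever $O_{p'}(G)\neq 1$). Every known proof injects a substantially stronger ingredient precisely here: classically, Brauer's Second Main Theorem applied to the trivial character, after a reduction to $|Q|=p$ using a subgroup $Z\leq Z(Q)$ of order $p$ so that $Q\leq C_G(Z)$ (note that an arbitrary composition series $Q_0\trianglelefteq\cdots\trianglelefteq Q_r=Q$ does not obviously yield your claimed factorisation, since $Q$ need not normalise or centralise the $Q_i$, and the induction really passes through the central quotient $Q/Z$); in Linckelmann's treatment, the Alperin--Brou\'e theory of Brauer pairs, specifically that the blocks $e$ of $kC_G(Q)$ with $e\,\Br_Q(b)\neq 0$ form a single $N_G(Q)$-orbit --- combined with your augmentation argument and the $N_G(Q)$-stability of the canonical principal block this immediately forces $\Br_Q(b)=e_C$. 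Your proposal supplies neither ingredient, and your alternative via $k(Q)\cong k$ again only shows the principal block occurs. Separately, in the $N_G(Q)$-step, the claim that the $N_G(Q)$-stable block $e_C$ of the normal subgroup $C_G(Q)$ ``is covered by a unique block of $N_G(Q)$'' is not a general fact of block theory --- stable blocks of normal subgroups are typically covered by several blocks --- so augmentation alone leaves open that $e_C$ properly contains $e_N$ as an idempotent of $Z(kN_G(Q))$. A correct argument is special to this situation: since $Q\trianglelefteq N_G(Q)$, one has $Q\leq O_p(N_G(Q))$, which lies in every defect group of every block $B$ of $kN_G(Q)$, so $\Br_Q(B)\neq 0$ for all such $B$; applying the $C_G(Q)$-part of the theorem to the group $N_G(Q)$ itself and comparing orthogonal idempotents in $e_C=\Br_Q(e_C)=\sum_{B}\Br_Q(B)$, the sum over blocks $B$ of $kN_G(Q)$ with $Be_C=B$, then forces $e_C$ to be a single block of $kN_G(Q)$, necessarily the principal one. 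Without arguments of this kind, both halves of the statement remain unproven.
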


\noindent Similarly, the following result will often be used to show that the  $\Br_{Q_m}(b)$ in the structure of the $B_\sigma$ above is simply equal to the identity of $kG$

\begin{Lemma}\label{pgroupLemma}\cite[1.11.4]{LinBlockI}
Let $G$ be a $p$-group and $\chr(k)=p$. Then $kG$ is indecomposable.
\end{Lemma}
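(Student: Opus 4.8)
The plan is to prove the stronger assertion that $kG$ is a \emph{local} algebra, from which indecomposability follows at once, since a local ring has no idempotents other than $0$ and $1$ (and in particular no nontrivial central idempotents). Write $\varepsilon\colon kG\to k$ for the augmentation map sending $\sum_{g\in G}\lambda_g\,g$ to $\sum_{g\in G}\lambda_g$, and let $I=\ker(\varepsilon)$ be the augmentation ideal, spanned over $k$ by the elements $g-1$ with $g\in G$. The single key claim, from which everything else is formal, is that $I$ is nilpotent. Granting this, $I$ is a nilpotent two-sided ideal with $kG/I\cong k$ a field, so $I$ coincides with the Jacobson radical $J(kG)$ and $kG/J(kG)\cong k$; hence $kG$ is local.

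To prove that $I$ is nilpotent I would induct on $|G|$, the base case $G=\{1\}$ being trivial. For the inductive step I would exploit that a nontrivial $p$-group has nontrivial centre, so there is a central element $z$ of order $p$; set $N=\langle z\rangle\normal G$ and $\bar{G}=G/N$. The projection induces a surjection $\pi\colon kG\to k\bar{G}$ whose kernel is the two-sided ideal $K=(z-1)kG=kG(z-1)$, the two descriptions agreeing because $z$ is central. Since $\chr(k)=p$ and $z$ is central, the binomial theorem gives $(z-1)^p=z^p-1=0$, and centrality lets one commute the factor $(z-1)$ past $kG$, so that $K^p=(z-1)^p kG=0$ and $K$ is nilpotent. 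By the inductive hypothesis the augmentation ideal $I(k\bar{G})$ is nilpotent, say $I(k\bar{G})^m=0$; as $\pi(I)=I(k\bar{G})$ this forces $I^m\subseteq K$, whence $I^{mp}\subseteq K^p=0$, completing the induction.

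With $I=J(kG)$ nilpotent and $kG/J(kG)\cong k$ in hand, I would finish by the standard lifting-of-idempotents argument. Any idempotent $e\in kG$ maps under $\varepsilon$ to an idempotent of $k$, necessarily $0$ or $1$. If $\varepsilon(e)=0$ then $e\in I=J(kG)$ is a nilpotent idempotent, forcing $e=0$; if $\varepsilon(e)=1$ then $1-e\in J(kG)$ is a nilpotent idempotent, forcing $e=1$. Thus $0$ and $1$ are the only idempotents of $kG$, so there are no nontrivial central idempotents and $kG$ is indecomposable as an algebra.

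The main obstacle is the nilpotency of the augmentation ideal, and within that the only genuinely $p$-group-specific input is the existence of a central subgroup of order $p$ together with the characteristic-$p$ identity $(z-1)^p=z^p-1$; everything after this is bookkeeping. I would also note that no hypothesis on $k$ beyond $\chr(k)=p$ is required, so the algebraic closedness assumed elsewhere in the paper plays no role in this lemma.
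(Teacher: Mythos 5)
Your proof is correct and complete: the induction on $|G|$ via a central element of order $p$ shows the augmentation ideal is nilpotent, hence equals $J(kG)$, so $kG$ is local and has no nontrivial idempotents. The paper does not prove this lemma but cites it from Linckelmann's book, where the argument is essentially the same (locality of $kG$ via nilpotency of the augmentation ideal), so there is nothing to add.
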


For the remainder of this section we fix $G=M_{12}$, $\chr(k)=2$, and $B_0=kGb$ the principal 2-block of $kG$, with block idempotent $b$. In order to prove Proposition \ref{M12prop} we will calculate some auxiliary $HH^1$ dimensions that appear in the alternating sum. In particular, $G$ has a unique conjugacy class of subgroups isomorphic to $C_2\times S_5$, to $S_3\times A_4$ and to $C_2^2\times S_3$, and the principal block of these groups arises in the alternating sum as some $kN_G(\sigma)\Br_Q(b)$ for some chain $\sigma$ and $2$-subgroup $Q$ of $G$.

\begin{Lemma}\label{c2xs5}
Let $H$ be one of $C_2\times S_5, \ S_3\times A_4$ or $C_2^2\times S_3$. Then $kH$ has two blocks $B_0'$ and $B_1'$ and we have the following dimensions.
\begin{itemize}
    \item[(i)] Let $H=C_2\times S_5$ Then $\dim_k(HH^1(B_0'))=22$ and $\dim_k(HH^1(B_1'))=8.$
    \item[(ii)] Let $H=S_3\times A_4$. Then $\dim_k(HH^1(B_0'))=12$ and $\dim_k(HH^1(B_1'))=2.$
    \item[(iii)] Let $H=C_2^2\times S_3$. Then $\dim_k(HH^1(B_0'))=24$ and $\dim_k(HH^1(B_1'))=8.$ 
\end{itemize}
\end{Lemma}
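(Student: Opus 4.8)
The plan is to compute each $HH^1$ dimension using the centraliser decomposition (Proposition \ref{CentraliserDecompProp}) together with the fact that $HH^1$ respects direct products of algebras. Each of the three groups $H$ is a direct product, and for a tensor product of algebras $kH = kH_1 \tenk kH_2$ one has the Künneth-type decomposition $HH^1(kH_1 \tenk kH_2) \cong HH^1(kH_1)\tenk Z(kH_2) \oplus Z(kH_1)\tenk HH^1(kH_2)$, so the dimensions multiply out in terms of the factors' $HH^1$ and centre dimensions. The first step is therefore to record, for each cyclic or small symmetric/alternating factor, the dimension of $HH^1$ of its group algebra at $p=2$ (via the centraliser decomposition, i.e.\ summing $\dim_k \Hom(C(x),k)$ over $2$-regular-class representatives $x$) and the dimension of its centre (the number of conjugacy classes).

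Concretely I would first handle the blocks. Each $kH$ has exactly two $2$-blocks $B_0'$ and $B_1'$; I would identify the block distribution of $kH$ from the block structure of the factors (for a direct product, blocks of $kH_1\tenk kH_2$ are tensor products of blocks of the factors, and $2$-blocks of $k S_3$, $kS_5$, $kA_4$ are well understood). For $H = C_2\times S_5$, the factor $kS_5$ has two $2$-blocks (the principal block of full defect and a defect-zero block), while $C_2$ contributes only its principal block; tensoring with $kC_2$ then yields the two blocks $B_0'$, $B_1'$ of $kH$. Similarly for $S_3\times A_4$ and $C_2^2\times S_3$ I would trace how the principal and non-principal $2$-blocks of the non-$2$-group factor ($S_5$, or $A_4$, or $S_3$) split $kH$ into its two blocks, with the $2$-group factor ($C_2$ or $C_2^2$) contributing only a principal block.

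Having the block-level decomposition, the second step is purely arithmetic: apply the centraliser decomposition to each block $B_i'$ directly, or equivalently apply the tensor-product formula for $HH^1$ restricted to each block summand. For each block I would list the $2$-regular conjugacy classes of $H$ lying in that block, compute $\dim_k\Hom(C_H(x),k)$ for a representative $x$ of each (this equals the rank of the maximal elementary abelian $2$-quotient of $C_H(x)$, by Lemma \ref{quotLemma}), and sum. The values $22,8$; $12,2$; $24,8$ should drop out of this bookkeeping, and one can cross-check them against $\dim_k HH^1(kH) = \dim_k HH^1(B_0') + \dim_k HH^1(B_1')$ computed from the full centraliser decomposition of $kH$, as well as against the tensor-product formula using known small values such as $\dim_k HH^1(kS_3)$, $\dim_k HH^1(kA_4)=2$, and the principal-block data already tabulated.

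The main obstacle is not conceptual but organisational: correctly matching up which $2$-regular classes of the direct product fall into which block, and correctly evaluating $\dim_k\Hom(C_H(x),k)$ for each centraliser. The centralisers in a direct product factor as products of centralisers, so $\Hom(C_H(x),k)$ decomposes accordingly, but care is needed because $HH^1(kH)$ mixes contributions and one must attribute each summand to the right block via the block the corresponding $p$-regular class belongs to. The safest route, which I expect to adopt, is to verify the numbers by an independent \texttt{GAP} computation using the centraliser-decomposition code of Appendix \ref{Appcode}, reserving the tensor-product formula as a consistency check on the block splitting.
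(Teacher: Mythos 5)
Your secondary route --- blocks of $kH_1\tenk kH_2$ are tensor products of blocks of the factors, combined with the K\"unneth decomposition $HH^1(A\tenk B)\cong HH^1(A)\tenk Z(B)\oplus Z(A)\tenk HH^1(B)$ --- is exactly the paper's argument, and had you committed to it the proof would go through. The method you say you would actually adopt, however, has a genuine gap. The centraliser decomposition of Proposition \ref{CentraliserDecompProp} is indexed by \emph{all} conjugacy classes of $H$, not only the $2$-regular ones, and more importantly it does not refine the block decomposition: the conjugacy-class summands $k[C_x]$ and the block summands $kHb$ of $kH$ are transverse as $H$-modules under conjugation, and there is no assignment of (ordinary or $p$-regular) classes to blocks for which summing $\dim_k\Hom(C_H(x),k)$ over ``the classes in $B$'' computes $\dim_k(HH^1(B))$. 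Already for $kS_3$ at $p=2$ this fails: the two $2$-regular classes are $\{1\}$ and the $3$-cycles, contributing $1$ and $0$ respectively, whereas $B_0(kS_3)\cong kC_2$ has $HH^1$ of dimension $2$ and the defect-zero block has $HH^1=0$. The \texttt{GAP} code of Appendix \ref{Appcode} likewise only returns the dimension for the whole group algebra ($30$, $14$ and $32$ in the three cases), so it cannot certify the block-wise split either.

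The other missing ingredient is the block-level input for the factors, which your K\"unneth formula needs but which the centraliser decomposition alone cannot supply. For $H=C_2\times S_5$ one must know $\dim_k(HH^1)$ and $\dim_k(Z)$ of each of the two $2$-blocks of $kS_5$ separately; the paper obtains these from Holm's derived equivalence of the principal block (tame of type $D(2\CA)$, defect group $D_8$) with $kS_4$, giving dimensions $6$ and $5$, and from Theorem \ref{stabeqThrm}(iii) for the other block (cyclic defect group $C_2$, hence derived equivalent to $kC_2$). With these equivalences each block of $kH$ becomes derived equivalent to a group algebra of a small group, to which the centraliser decomposition does apply. The cases $S_3\times A_4$ and $C_2^2\times S_3$ are easier, since $kA_4$ is indecomposable and $B_0(kS_3)\cong kC_2$, $B_1(kS_3)\cong M_n(k)$, but your proposal as written does not identify these reductions, and without them the claimed numbers do not ``drop out of the bookkeeping.''
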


\begin{proof}

First let $H=C_2\times S_5$. Blocks of $H$ are in bijection with blocks of $kC_2\ten_k kS_5$, and $kS_5$ decomposes into two blocks $B_0(kS_5)$ and $B_1(kS_5)$ with defect groups isomorphic to $D_8$ and $C_2$ respectively. The block $B_0(kS_5)$ is then a tame block of type $D(2\CA)$, derived equivalent to $kS_4$ by Holm \cite{HolmDer}, which by the centraliser decomposition has $\dim_k(HH^1(kS_4))=6$ and has $\dim_k(HH^0(kS_4))=\dim_k(Z(kS_4))=5.$ One sees therefore that $HH^1(B_0')\cong HH^1(k(C_2\times S_4))$ and the latter has an additive decomposition isomorphic to $kC_2\ten_kZ(kS_4)\oplus kC_2\ten_k HH^1(kS_4)$ by the K\"unneth formula (see \cite[2.21.7]{LinBlockI}), whence the dimension of $HH^1(B_0')$ is 22. 

Similarly, the block $B_1(kS_5)$ is derived equivalent to $kC_2$ by Theorem \ref{stabeqThrm}(iii) so that $HH^1(kB_0')\cong HH^1(k(C_2\times C_2))$ which has dimension 8.  Using the centraliser decomposition one verifies that $HH^1(kH)$ has dimension 30. 

Now let $H=S_3\times A_4$. The proof follows in the same vein as (i). The group algebra $kA_4$ is indecomposable whereas $kS_3$ has 2 blocks $B_0(kS_3)$ and $B_1(kS_3)$ with defect 1 and 0 respectively, isomorphic to $kC_2$ and $M_n(k)$ respectively, for some $n$. Whence $B_0'\cong B_0(kS_3\tenk kA_4)\cong k(C_2\times A_4)$ which has $HH^1$ of dimension 12 by the centraliser decomposition. Similarly $B_1'\cong kA_4$ which has $HH^1$ of dimension 2, and using the centraliser decomposition one verifies that $HH^1(kH)$ has dimension 14.

Finally let $H=C_2^2\times S_3$. One sees that $B_0'\cong B_0(kC_2^2\tenk kS_3)\cong kC_2^2\tenk B_0(kS_3)\cong kC_2^3$ which has $HH^1$ of dimension 24, $B_1'\cong kC_2^2$ which has $HH^1$ of dimension 8, and using the centraliser decomposition one verifies that $HH^1(kH)$ has dimension 32.
\end{proof}

\begin{proof}[Proof of Proposition \ref{M12prop}]
We will apply K\"ulshammer and Robinson's result using $\CU_G$, the complex of radical 2-subgroups of $G$. A defect group of $B_0$ is given by $P\cong C_4^2\rtimes C_2^2$ (see \texttt{SmallGroup(64,134)} in the \texttt{GAP} library), and using \cite[Table 2D]{AnCon} we deduce the information displayed in Table \ref{tab2}, which gives us the radical 2-subgroups of $G$, up to $G$-conjugacy (the details of the actions of the semidirect product may be found using \texttt{GAP} and the database \cite{DokGrp}).

\begin{table}[h]
\centering
\caption{The radical 2-subgroups of $M_{12}$}\label{tab2}
\begin{tabular}{l|p{0.14\linewidth}|p{0.14\linewidth}|p{0.14\linewidth}}
Label, $Q$  &  Isomorphic to           &  \hfil $N_G(Q)$   		    &  \hfil $C_G(Q)$  		    \\ 
\hline
\hfil $P_0$ &  \hfil $\{1_G\}$      		   &  \hfil $G$         	     &  \hfil $G$        	      \\
\hfil $P_1$ &  \hfil $C_2$          		   &  \hfil $C_2\times S_5$      &  \hfil $C_2\times S_5$     \\
\hfil $P_2$ &  \hfil $C_2\times C_2$ 		   &  \hfil $S_3\times A_4$      &  \hfil $C_2^2\times S_3$   \\
\hfil $P_3$ &  \hfil $C_4\rtimes D_8$          &  \hfil $C_4^2\rtimes D_{12}$ &  \hfil $C_2\times C_2$     \\
\hfil $P_4$ &  \hfil $2_+^{1+4}$               &  \hfil $Q_8\rtimes S_4$     &  \hfil $C_2$     		  \\
\hfil $P$   &  \hfil $ C_4^2\rtimes C_2^2$          &  \hfil $P$    		         &  \hfil $C_2$   	           
\end{tabular}
\end{table}

The $G$-conjugacy classes of subgroups of $P$ and their lattice are given in Figure \ref{M12lattice}, where reading left to right a connecting line denotes strict inclusion.
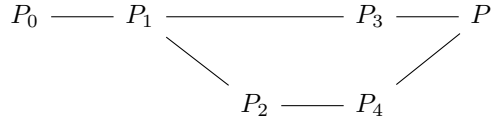
\begin{figure}[h]
\caption{Lattice of radical 2-subgroups of $M_{12}$}\label{M12lattice}
\[\begin{tikzcd}
	{P_0} & {P_1} & & {P_3} & P \\
	&& {P_2} & {P_4}
	\arrow[no head, from=1-1, to=1-2]
	\arrow[no head, from=1-2, to=2-3]
	\arrow[no head, from=2-3, to=2-4]
	\arrow[no head, from=2-4, to=1-5]
	\arrow[no head, from=1-2, to=1-4]
	\arrow[no head, from=1-4, to=1-5]
\end{tikzcd}\]
\end{figure}
We therefore have that $\CU_G/G$ consists of 20 chains of radical 2-subgroups (all starting at $P_0=\{1_G\}$); one of length 0, five of length 1, eight of length 2, 5 of length 3 and one of length 4. From these 20 chains, we obtain the  information displayed in Table \ref{tab3}. We note that the $B_{\chi}$ are found using Theorem \ref{BrauerThrm3}, Lemma \ref{pgroupLemma} and \texttt{GAP}, the dimensions $\dim_k(HH^1(B_\chi))$ for $\chi\in\{\beta,\gamma,\eta\}$ are calculated in Lemma \ref{c2xs5}, and all other dimensions are calculated using the centraliser decomposition.

\begin{table}[h]
\centering
\caption{Representatives of the classes of $\CU_G/G$}\label{tab3}
\begin{tabular}{l|p{0.28\linewidth}|p{0.28\linewidth}|p{0.2\linewidth}}
 $\chi$  & \hfil Representative          &  \hfil $B_\chi$   		    &  \hfil $\dim_k(HH^1(B_\chi))$  		    \\ 
\hline
\hfil $\alpha$   &  \hfil $P_0$                &  \hfil $B_0(kG)=kGb$         	          &  \hfil      ?  	      \\
\hfil $\beta$    &  \hfil $P_0<P_1$            &  \hfil $B_0(k(C_2\times S_5))$         &  \hfil  	22      \\
\hfil $\gamma$   &  \hfil $P_0<P_2$            &  \hfil $B_0(k(S_3\times A_4))$  &  \hfil 		12 	      \\
\hfil $\delta$   &  \hfil $P_0<P_3$            &  \hfil $k(C_4^2\rtimes D_{12})$		  &  \hfil 		30	      \\
\hfil $\epsilon$ &  \hfil $P_0<P_4$            &  \hfil $k(Q_8\rtimes S_4)$    		  &  \hfil      23		  \\
\hfil $\zeta$    &  \hfil $P_0<P$              &  \hfil $kP$    		       			  &  \hfil    	40          \\
\hfil $\eta$     &  \hfil $P_0<P_1<P_2$        &  \hfil $B_0(k(C_2^2\times S_3))$       &  \hfil   	24          \\
\hfil $\theta$   &  \hfil $P_0<P_1<P_4$        &  \hfil $k(C_2\times D_8)$    	      &  \hfil    	28          \\
\hfil $\iota$    &  \hfil $P_0<P_1<P$          &  \hfil $k(C_2\times D_8)$    	      &  \hfil    	28          \\
\hfil $\kappa$   &  \hfil $P_0<P_1<P_3$        &  \hfil $k(C_2\times S_4)$    	      &  \hfil    	22          \\
\hfil $\lambda$  &  \hfil $P_0<P_2<P_4$        &  \hfil $k(C_2\times A_4)$    	      &  \hfil    	12          \\
\hfil $\mu$      &  \hfil $P_0<P_2<P$          &  \hfil $kC_2^3$    		       		  &  \hfil    	24          \\
\hfil $\nu$      &  \hfil $P_0<P_4<P$          &  \hfil $kP$    		        		  &  \hfil    	40          \\
\hfil $\xi$      &  \hfil $P_0<P_3<P$          &  \hfil $kP$    		    		      &  \hfil    	40          \\
\hfil $\pi$      &  \hfil $P_0<P_1<P_2<P_4$    &  \hfil $kC_2^3$    		     	      &  \hfil    	24          \\
\hfil $\rho$     &  \hfil $P_0<P_1<P_2<P$      &  \hfil $kC_2^3$    		     	      &  \hfil    	24          \\
\hfil $\sigma$   &  \hfil $P_0<P_1<P_4<P$      &  \hfil $k(C_2\times D_8)$              &  \hfil    	28          \\
\hfil $\tau$     &  \hfil $P_0<P_1<P_3<P$      &  \hfil $k(C_2\times D_8)$              &  \hfil    	28          \\
\hfil $\phi$     &  \hfil $P_0<P_2<P_4<P$      &  \hfil $kC_2^3$    		     		  &  \hfil    	24         \\
\hfil $\psi$     &  \hfil $P_0<P_1<P_2<P_4<P$  &  \hfil $kC_2^3$    		         	  &  \hfil    	24          
\end{tabular}
\end{table}

Let $\Lambda=\{\beta,\gamma,\ldots,\psi\}$, that is, the set of all chain labels $\chi$ in Table \ref{tab3}, excluding $\chi=\alpha$. Then the alternating sum formula of Theorem \ref{dimThrm} tells us that $$ \dim_k(HH^1(B_\alpha))=\dim_k(HH^1(B_0(kG)))=\sum_{\chi\in\Lambda}(-1)^{|\chi|+1}\dim_k(HH^1(B_\chi))=13.$$ This completes the proof.

\end{proof}

\appendix 

\section{The dimension of the $HH^1$ of a finite group algebra}\label{Appcode}

The much celebrated centraliser decomposition allows us to determine the dimensions of the $HH^1$ of a finite group algebra with relative ease. It's heavy use throughout the calculations and proofs in this paper confirms its utility, and so we explain briefly where these dimensions come from, as well as detail the \texttt{GAP} code used to efficiently implement the centraliser decomposition and compute these dimensions.

\begin{Proposition}\cite[Theorem 2.11.2(ii)]{BensonII}\label{CentraliserDecompProp}
	As a $k$-vector space, the additive structure of the Hochschild cohomology of $kG$ is given by
	\[HH^*(kG)\cong\bigoplus_{x\in G/\sim}H^*(C_G(x);k)\]
	where $G/\sim$ is the set of conjugacy class representatives of $G$ and $C_G(x)$ is the centraliser of $x\in G$. 
\end{Proposition}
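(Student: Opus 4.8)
The plan is to translate the bimodule definition of Hochschild cohomology into ordinary group cohomology with coefficients in the conjugation module, and then to split that coefficient module along the conjugacy classes of $G$.

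First I would rewrite $HH^*(kG)=\Ext^*_{kG\tenk(kG)^{\op}}(kG,kG)$ using the identification of $kG$-bimodules with $k[G\times G]$-modules. Under this identification the bimodule $kG$, on which $(g,h)$ acts by $a\mapsto gah^{-1}$, becomes the permutation module $k[(G\times G)/\Delta G]$, where $\Delta G=\{(g,g):g\in G\}$ is the diagonal; indeed the map $(g,h)\mapsto gh^{-1}$ identifies $(G\times G)/\Delta G$ with $G$ equivariantly, the stabiliser of $1\in G$ being exactly $\Delta G$. Hence $HH^*(kG)\cong\Ext^*_{k[G\times G]}(k[(G\times G)/\Delta G],kG)$, and by the Eckmann--Shapiro lemma (induction being both left and right adjoint to restriction for group algebras) this is isomorphic to $\Ext^*_{k\Delta G}(k,\Res_{\Delta G}kG)=H^*(\Delta G;kG)$. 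Transporting along $\Delta G\cong G$, the restricted action of $\Delta G$ on $kG$ is precisely conjugation $g\cdot a=gag^{-1}$, giving the standard isomorphism $HH^*(kG)\cong H^*(G;kG)$ with $kG$ the conjugation module.

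Next I would decompose the conjugation module. Since conjugation permutes the group-basis of $kG$, we obtain $kG\cong\bigoplus_C k[C]$ as $kG$-modules, the sum running over the conjugacy classes $C$ of $G$ and $k[C]$ the $k$-span of $C$. Fixing a representative $x$ of $C$, the orbit--stabiliser theorem identifies the $G$-set $C$ with $G/C_G(x)$, as the conjugation-stabiliser of $x$ is its centraliser; thus $k[C]\cong\Ind_{C_G(x)}^G k$. Applying $H^*(G;-)$ and Shapiro's lemma summand by summand gives $H^*(G;k[C])\cong H^*(C_G(x);k)$, and summing over $x\in G/\sim$ yields the asserted isomorphism.

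None of these steps is genuinely difficult, as each is a standard homological manipulation; the only point needing care is that the result is claimed purely as an isomorphism of graded $k$-vector spaces, so I would deliberately not attempt to carry the cup product or Gerstenhaber bracket through the Eckmann--Shapiro and Shapiro isomorphisms. This caveat is exactly why the statement records only the additive structure, and it is all that is required for the dimension computations made throughout the paper.
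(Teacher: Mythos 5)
Your argument is correct and is essentially the standard proof of this result; the paper itself gives no proof but cites Benson [Theorem 2.11.2(ii)], whose argument is exactly the one you describe (Eckmann--Shapiro to pass to $H^*(G;kG)$ with the conjugation action, then the orbit decomposition of $kG$ into conjugacy-class summands $\Ind_{C_G(x)}^G k$ and Shapiro's lemma). Your closing caveat about the isomorphism being only additive is also the right one to make.
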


\noindent The first cohomology group of $G$ with coefficients in $k^\times$, $H^1(G;k^\times)$, will always be considered with trivial action of $G$ on $k^\times$. One therefore sees that
$$
HH^1(kG)\cong\bigoplus_{x\in G/\sim}\Hom(C_G(x),k)
$$
where each summand is a finite abelian group of order dividing $|C_G(x)|$, and in particular when viewed as $k$-modules, $\dim_k(HH^1(kG))=\sum_{x\in G/\sim}\dim_k(\Hom(C_G(x),k))$. 

There exists a twisted group algebra analogue to the centraliser decomposition

\begin{Proposition}\cite[Lemma 3.5]{WithSpoon}\label{twistedcent}
There is an additive decomposition $$HH^1(k_\alpha G)\cong \bigoplus_{x\in G/\sim} H^1(C_G(x);V_{x,\alpha}),$$ where $V_{x,\alpha}$ is a $1$-dimensional $C_G(x)$-module spanned by the image $\hx\in k_\alpha G$ of $x\in G$, with action given by $g\cdot \hx=\alpha(g,x)\alpha(x,g)^{-1}\hx$ for all $g\in C_G(x)$.
\end{Proposition}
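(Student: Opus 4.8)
The plan is to run the proof of the centraliser decomposition (Proposition \ref{CentraliserDecompProp}) with $kG$ replaced by $k_\alpha G$, keeping track of the cocycle at each step. After a harmless coboundary change I normalise $\alpha$ so that $\alpha(g,1)=\alpha(1,g)=1$ and $\alpha(g,g^{-1})=1$ for all $g\in G$; the cocycle identity then forces $\alpha(g,g^{-1})=\alpha(g^{-1},g)$, and in the twisted algebra one has $\hg^{-1}=\widehat{g^{-1}}$. One checks directly that for $g\in C_G(x)$ the scalar $\alpha(g,x)\alpha(x,g)^{-1}$ is unchanged under such a rescaling (because $gx=xg$ makes the two boundary contributions cancel), so no generality is lost.

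First I would reduce to group cohomology. Writing $A=k_\alpha G$ and $A^e=A\ten_k A^{\op}$, the elements $c_g=\hg\ten(\widehat{g^{-1}})^{\op}$ act on $A$ by conjugation $\hx\mapsto \hg\hx\hg^{-1}$; with the above normalisation a short cocycle computation gives $c_gc_h=c_{gh}$, so the $c_g$ span a subalgebra $C\subseteq A^e$ isomorphic to the \emph{untwisted} group algebra $kG$ (concretely $C$ is the group algebra of the subgroup $\{(g,g^{-1}):g\in G\}\cong G$ of $G\times G$). Since $1\in A$ is fixed by $C$ and generates $A$ over $A^e$, there is an isomorphism of $A^e$-modules $A\cong A^e\ten_C k$, i.e. $A$ is induced from the trivial $C$-module. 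As $A^e$ is a twisted group algebra of $G\times G$, it is free over the subgroup-subalgebra $C$, so Eckmann--Shapiro yields $HH^*(k_\alpha G)=\Ext^*_{A^e}(A,A)\cong \Ext^*_{kG}(k,A_{\mathrm{conj}})=H^*(G;A_{\mathrm{conj}})$, where $A_{\mathrm{conj}}$ is $k_\alpha G$ with the conjugation action $\hg\cdot\hx=\hg\hx\hg^{-1}$.

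Next I would decompose the coefficient module. Conjugation sends $\hat y$ to a scalar multiple of $\widehat{gyg^{-1}}$, so $A_{\mathrm{conj}}=\bigoplus_{x\in G/\sim}M_x$ as $kG$-modules, where $M_x$ is the span of $\{\hat y:y\sim x\}$. The group $G$ permutes the lines $k\hat y$ inside $M_x$ transitively, and $\hg$ fixes the line $k\hx$ exactly when $g\in C_G(x)$; hence $M_x\cong\Ind_{C_G(x)}^G \Va$ with $\Va=k\hx$ one-dimensional. A second application of Eckmann--Shapiro gives $H^*(G;M_x)\cong H^*(C_G(x);\Va)$, and summing over conjugacy class representatives produces the asserted decomposition in every degree, in particular in degree $1$.

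It remains to identify the $C_G(x)$-action on $\Va$, the only genuinely computational point. For $g\in C_G(x)$ we have $\hg\cdot\hx=\hg\hx\widehat{g^{-1}}=\alpha(g,x)\,\widehat{gx}\,\widehat{g^{-1}}=\alpha(g,x)\alpha(gx,g^{-1})\hx$, using $gxg^{-1}=x$. The cocycle identity applied to the triple $(x,g,g^{-1})$, together with $\alpha(g,g^{-1})=\alpha(x,1)=1$, gives $\alpha(xg,g^{-1})=\alpha(x,g)^{-1}$, and since $xg=gx$ this reads $\alpha(gx,g^{-1})=\alpha(x,g)^{-1}$. Hence $\hg\cdot\hx=\alpha(g,x)\alpha(x,g)^{-1}\hx$, exactly as claimed. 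I expect the main obstacle to be the first step: setting up the enveloping algebra carefully enough to recognise that the conjugation subalgebra $C$ is untwisted $kG$ (so that $H^*(G;-)$ is honestly ordinary group cohomology); once that is in place, everything reduces to two invocations of Eckmann--Shapiro and the short cocycle computation above.
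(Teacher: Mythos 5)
The paper offers no proof of this statement; it is quoted directly from Witherspoon \cite[Lemma 3.5]{WithSpoon}. Your argument is correct and is essentially the standard double Eckmann--Shapiro proof of the centraliser decomposition adapted to the twisted case, which is also the route taken in the cited source: the one genuinely delicate point, that after normalising $\alpha(g,g^{-1})=1$ the diagonal conjugation subalgebra $C$ is \emph{untwisted} (via $\alpha(g,h)\alpha(h^{-1},g^{-1})=1$), and the final identification $\alpha(gx,g^{-1})=\alpha(x,g)^{-1}$ for $g\in C_G(x)$, are both handled correctly, and your observation that the scalar $\alpha(g,x)\alpha(x,g)^{-1}$ is a coboundary invariant for commuting pairs justifies the initial normalisation.
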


To calculate the individual dimensions of $\Hom(C_G(x),k)$ in Proposition \ref{CentraliserDecompProp} we use the next result, which then forms the rest of the \texttt{GAP} code.

\begin{Lemma}\label{quotLemma}
	The $k$-vector space $\Hom(G,k)$ is nonzero if and only if $G$ has a quotient isomorphic to a nontrivial $p$-group. In particular, $\dim_k(\Hom(G,k))=\dim_k(\Hom(R/\Phi(R),k))$, where $R=G/O^p(G)$.
\end{Lemma}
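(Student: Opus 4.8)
The plan is to exploit the fact that, since $\chr(k)=p$, the additive group $(k,+)$ is an $\Fp$-vector space, hence an elementary abelian $p$-group of exponent $p$. Any group homomorphism $\varphi\colon G\to (k,+)$ therefore has image an abelian group of exponent dividing $p$, so $\varphi$ annihilates both the commutator subgroup $G'$ and every $p$-th power. Equivalently, $\varphi$ factors uniquely through the maximal elementary abelian $p$-quotient of $G$, and this factorization is evidently compatible with the $k$-vector space structure on $\Hom(G,k)$ given by scaling values in $k$.

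First I would identify this maximal elementary abelian $p$-quotient explicitly with $R/\Phi(R)$, where $R=G/O^p(G)$. Since $O^p(G)$ is by definition the smallest normal subgroup with $p$-group quotient, $R$ is the maximal $p$-quotient of $G$, and $\Phi(R)=R'R^p$ exhibits $R/\Phi(R)$ as the maximal elementary abelian $p$-quotient of $R$. Because any elementary abelian $p$-quotient of $G$ is in particular a $p$-quotient, it factors through $R$ and hence through $R/\Phi(R)$; so $R/\Phi(R)$ is also the maximal elementary abelian $p$-quotient of $G$. This yields a natural $k$-linear isomorphism $\Hom(G,k)\cong\Hom(R/\Phi(R),k)$.

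From this the two claims follow. For the equivalence, $\Hom(G,k)\neq 0$ holds precisely when $G$ admits a nontrivial homomorphism to an elementary abelian $p$-group, equivalently a nontrivial elementary abelian $p$-quotient. Such a quotient is in particular a nontrivial $p$-quotient, and conversely any nontrivial $p$-group quotient $Q$ surjects onto the nontrivial elementary abelian $p$-group $Q/\Phi(Q)$; hence $\Hom(G,k)\neq0$ if and only if $G$ has a quotient isomorphic to a nontrivial $p$-group. For the dimension formula I would write $R/\Phi(R)\cong C_p^d$ with $d=\rank(R/\Phi(R))$ and note that a homomorphism $C_p\to(k,+)$ is determined by the image of a generator, which may be any element of $k$ since $p\cdot\lambda=0$ for all $\lambda\in k$; thus $\Hom(C_p,k)\cong k$ as a one-dimensional $k$-vector space, whence $\dim_k\Hom(R/\Phi(R),k)=d=\dim_k\Hom(G,k)$.

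The argument is essentially formal, so the only point demanding care is the bookkeeping of the three successive reductions—to the abelianization, to its maximal $p$-quotient $R$, and to the exponent-$p$ quotient $R/\Phi(R)$—together with the verification that each reduction respects the $k$-vector space structure on $\Hom$, so that the final identification is genuinely $k$-linear rather than merely a bijection of sets.
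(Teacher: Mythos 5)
Your argument is correct and is precisely the standard one this lemma is implicitly relying on: the paper states Lemma \ref{quotLemma} without proof, and your chain of reductions---through the abelianization, the maximal $p$-quotient $R=G/O^p(G)$, and the Frattini quotient $R/\Phi(R)=R/R'R^p$, using that $(k,+)$ is elementary abelian of exponent $p$ and that $\Hom(C_p,k)\cong k$---is exactly the intended justification, including the point that the identification is $k$-linear and that the rank of $R/\Phi(R)$ gives the dimension (which is what the accompanying \texttt{GAP} code computes). No gaps.
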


We detail below the simple \texttt{GAP} commands to determine the dimensions of the $HH^1$ of a finite group algebra (in particular a group on which \texttt{GAP} is able to perform calculations), using the centraliser decomposition and Lemma \ref{quotLemma}. 

Once a group $G$ is defined in \texttt{GAP}, the command \texttt{div} below gives the set of prime divisors of $|G|$, $\pi(G)$. The functions following \texttt{div} then take as input either $G$, or $G$ and a prime $p\in\pi(G)$, and output the following lists;\begin{itemize}
    \item[1.] The centralisers of a complete set of conjugacy class representatives of $G$, 
    \item[2.] For each conjugacy class representative $x\in G/\sim$, the groups $C_G(x)/O^p(C_G(x))=R$,
    \item[3.] The elementary abelian $p$-groups $R/\Phi(R)$,
    \item[4.] The rank of $R/\Phi(R)$,
\end{itemize} respectively. The final function then produces a list of dimensions $\dim_k(HH^1(kG))$, with one entry for each $p\in \pi(G)$. 

\vspace{2mm}
\verb|div:=PrimeDivisors(Size(G));|

\vspace{2mm}
\verb|ListFpCentralisers:=function(X);|

\verb|> return List(ConjugacyClasses(X),i->|

\hspace{16mm}\verb|Image(IsomorphismFpGroup(Centralizer(X,Representative(i)))));|

\verb|> end;;|

\vspace{2mm}
\verb|ListMaxPQuot:=function(X,p);|

\verb|> return List(ListFpCentralisers(X),i -> Image(EpimorphismPGroup(i,p)));|

\verb|> end;;|

\vspace{2mm}
\verb|ListElAb:=function(X,p);|

\verb|> return List(ListMaxPQuot(X,p), i -> i/FrattiniSubgroup(i));|

\verb|> end;;|

\vspace{2mm}
\verb|ListPRank:=function(X,p);|

\verb|> return List(ListElAb(X,p), i -> RankPGroup(i));|

\verb|> end;;|

\vspace{2mm}
\verb|dimHH1:=function(X);|

\verb|> return List(div,i->Sum(ListPRank(X,i)));|

\verb|> end;|

\vspace{5mm}
\noindent\textbf{Acknowledgements.} I would like to thank Xin Huang for his helpful comments on an earlier version of this paper, and my PhD supervisor Markus Linckelmann for his support, and for many interesting conversations.

\end{document}